\DeclareSymbolFont{AMSb}{U}{msb}{m}{n}
   \renewcommand\@biblabel[1]{#1.}
\definecolor{dkgreen}{rgb}{0,0.6,0}
\definecolor{gray}{rgb}{0.5,0.5,0.5}
\definecolor{mauve}{rgb}{0.58,0,0.82}
\tiny\color{gray},
      \numberwithin{equation}{section}
\definecolor{antiquewhite}{rgb}{0.98, 0.92, 0.84}
\definecolor{buff}{rgb}{0.94, 0.86, 0.51}
\definecolor{palecopper}{rgb}{0.85, 0.54, 0.4}
\definecolor{fluorescentyellow}{rgb}{0.8, 1.0, 0.0}
\definecolor{bole}{rgb}{0.47, 0.27, 0.23}
\definecolor{cornellred}{rgb}{0.7, 0.11, 0.11}
\definecolor{britishracinggreen}{rgb}{0.0, 0.26, 0.15}
\definecolor{cobalt}{rgb}{0.0, 0.28, 0.67}
\DeclareSymbolFont{usualmathcal}{OMS}{cmsy}{m}{n}
\DeclareSymbolFontAlphabet{\mathcal}{usualmathcal}
\newcommand{\BA}{{\mathbb{A}}}
\newcommand{\BC}{{\mathbb{C}}}
\newcommand{\BN}{{\mathbb{N}}}
\newcommand{\BP}{{\mathbb{P}}}
\newcommand{\BZ}{{\mathbb{Z}}}
\newcommand{\CH}{{\mathcal H}}
\newcommand{\CP}{{\mathcal P}}
\newcommand{\Fm}{{\mathfrak{m}}}
\newcommand{\Fy}{{\mathfrak{y}}}
\newcommand{\Fz}{{\mathfrak{z}}}
\DeclareMathOperator{\Hilb}{Hilb}
\DeclareFontFamily{OT1}{rsfs}{}
\DeclareFontShape{OT1}{rsfs}{n}{it}{<-> rsfs10}{}
\DeclareMathAlphabet{\curly}{OT1}{rsfs}{n}{it}
\renewcommand\hom{\mathscr{H}\kern-0.3em\mathit{om}}
\newcommand\Hom{\operatorname{Hom}}
\DeclareMathOperator{\lHom}{\mathscr{H}\kern-0.3em\mathit{om}}
\DeclareMathOperator{\RRlHom}{\mathbf{R}\kern-0.025em\mathscr{H}\kern-0.3em\mathit{om}}
\DeclareMathOperator{\lExt}{{\mathscr{E}\kern-0.2em\mathit{xt}}}
\tikzset{commutative diagrams/arrow style=math font}
\tikzset{commutative diagrams/.cd,
mysymbol/.style={start anchor=center,end anchor=center,draw=none}}
\tikzset{
shift up/.style={
to path={([yshift=#1]\tikztostart.east) -- ([yshift=#1]\tikztotarget.west) \tikztonodes}
}
}
\theoremstyle{definition}
\newtheorem*{lemma*}{Lemma}
\newtheorem*{theorem*}{Theorem}
\newtheorem*{example*}{Example}
\newtheorem*{fact*}{Fact}
\newtheorem*{notation*}{Notation}
\newtheorem*{definition*}{Definition}
\newtheorem*{prop*}{Proposition}
\newtheorem*{remark*}{Remark}
\newtheorem*{corollary*}{Corollary}
\newtheorem*{conventions*}{Conventions}
\newtheorem{definition}{Definition}[section]
\newtheorem{example}[definition]{Example}
\newtheorem{remark}[definition]{Remark}
\newtheoremstyle{thm} 
        {3mm}
        {3mm}
        {\slshape}
        {0mm}
        {\bfseries}
        {.}
        {1mm}
        {}
\theoremstyle{thm}
\newtheorem{corollary}[definition]{Corollary}
\newtheorem{lemma}[definition]{Lemma}
\newtheorem{thm}{Theorem}
\newtheorem{cor}{Corollary}
\newtheoremstyle{ex} 
        {3mm}
        {3mm}
        {}
        {0mm}
        {\scshape}
        {.}
        {1mm}
        {}
\theoremstyle{ex}
\newtheoremstyle{sol} 
        {3mm}
        {3mm}
        {}
        {0mm}
        {\scshape}
        {.}
        {1mm}
        {}
\theoremstyle{sol}
\newenvironment{Tableau}[1]{%
  \tikzpicture[scale=0.5,draw/.append style={thick,black},
                      baseline=(current bounding box.center)]
    \tableauRow=-1.5
    \foreach \Row in {#1} {
       \tableauCol=0.5
       \foreach\k in \Row {
         \draw[thin](\the\tableauCol,\the\tableauRow)rectangle++(1,1);
         \draw[thin](\the\tableauCol,\the\tableauRow)+(0.5,0.5)node{$\k$};
         \global\advance\tableauCol by 1
       }
       \global\advance\tableauRow by -1
    }
}{\endtikzpicture}
\newtheorem*{Acknowledgments*}{Acknowledgments}
\DeclareMathAlphabet\BCal{OMS}{cmsy}{b}{n}
\address{Centre for Mathematical Sciences, University of Cambridge, Wilberforce Road, CB3 0WA, Cambridge, United Kingdom}
\title[A proof of the 1978  Brian\c{c}on-Iarrobino Conjecture in three dimensions]{A proof of the 1978  Brian\c{c}on-Iarrobino Conjecture in three dimensions}
\author{Owen Mackenzie}
\email{om379@cam.ac.uk}
\author{Fatemeh Rezaee}
\email{fr414@cam.ac.uk}
\begin{document}
\maketitle
\begin{abstract}

We resolve the Brian\c{c}on-Iarrobino Conjecture regarding the maximum singularity of $\mathcal{H}=\mathrm{Hilb}^{l}(\mathbb{A}^3)$, where $l$ is a tetrahedral number, by refining the work of Ramkumar-Sammartano in \cite{Ramkumar-Sammartano}. This also immediately implies the conjectural necessary condition for a point of $\mathcal{H}$ to have the maximal singularity, suggested by the second-named author in \cite{Rezaee-23-Conjectures}. In a sequel to this article, \cite{Mackenzie-Rezaee2}, we prove a generalized version of this conjecture for certain non-tetrahedral $l$, via proving the conjectural necessary condition. 
\end{abstract}

{\hypersetup{linkcolor=black}
\tableofcontents}

\section{Introduction} 

The geometry of Hilbert schemes of points $\mathrm{Hilb}^l(\BA^N)$ has been studied for decades.  Perhaps, the longest-standing open problem regarding the singularities  of $\mathrm{Hilb}^l(\BA^N)$  is a conjecture by Brian\c{c}on and Iarrobino in the 1970s, which predicted the maximum singularity when $l$ is a (hyper)tetrahedral number. In this article, we give a short proof of this conjecture for $N=3$.

\subsection{History and relevant work}

Since its invention by Grothendieck in the 1960s, \cite{Grothendieck}, the Hilbert scheme has been ubiquitous and studied in various fields in mathematics, including algebraic geometry (in the areas such as enumerative geometry and birational geometry), commutative algebra, combinatorics, representation theory, gauge theory, (e.g., see \cite{Grojnowski}), and knot theory, (e.g., see \cite{GORSKY2015403}). Because of its highly singular nature, studying the global geometry of the Hilbert scheme is highly complicated. To measure how significant the singularities are, we need to measure the dimension of the tangent space at a given point of the Hilbert scheme. The maximal singularity occurs at the points with the maximal dimension of the tangent space.

One of the major results on the global geometry of the Hilbert scheme appeared in the Ph.D. thesis of Hartshorne, \cite{HartshorneConnectedness} in the 1960s,  proving the connectedness of the Hilbert scheme. In the late 1960s, Fogarty showed the smoothness of the Hilbert scheme for $N=2$, \cite{Fogarty}. In the early 1970s, Iarrobino found the first example of a reducible Hilbert scheme of points in three dimensions for the case of $l=78$, \cite{Iarrobino72} (other works on the (ir)reducibility in three dimensions include \cite{Hilb8} in the late 2000s, and \cite{Jardim, Hilb_11} in the 2010s). In the late 1970s, Brian\c{c}on and Iarrobino predicted that ideals with maximal number of generators give the maximal singularity, while Sturmfels disproved it, \cite{Sturmfels}. In the early 2000s, Haiman used the Hilbert scheme to resolve a combinatorial conjecture, \cite{Haiman2} (also see \cite{MillerSturmfels2005}). In the 2000s, Vakil proved Murphy's law for Hilbert schemes of positive dimensional subspaces \cite{Vakil06},  and more recently, Jelisiejew proved it for the Hilbert scheme of points \cite{Jelisiejew20}.  In the 2000s, Sturmfels suggested that the maximal singularity of $\mathrm{Hilb}^l(\BA^3)$ is attained at an initial
monomial ideal of the generic configuration of $l$ points, which was recently disproved in \cite{Ramkumar-Sammartano}. For connections to enumerative geometry, we refer to \cite{BBS, BFHilb,JKSCounterBehrend, MNOP1, Nesterov25, PT, RicolfiSign} and for the parity conjecture, we refer to \cite{PandharipandeSlides,Ramkumar-Sammartano1,GGGL23}. For a recent work on the irrational components of the Hilbert scheme of points, we refer to \cite{Farkas-Pandharipande-Sammartano24}, and for rational singularities, we refer to \cite{Ramkumar-Sammartano24}. For further works on the case of 3-folds, we refer to \cite{KatzS, HuX}. Among the more recent works, in \cite{Jelisiejew-Ramkumar-Sammartano24}, the authors consider the opposite problem of identifying the least singular points, i.e., the smooth point in three dimensions. For the birational geometry of $\Hilb^l(\BP^2)$, we refer to \cite{ABCH, Huizenga-2016, Huizenga-thesis, LZ, LZ2}. In upcoming articles by Gross and the second-named author (\cite{Gross-Rezaee1, Gross-Rezaee2}), the authors use the machinery of scattering diagrams to describe the birational geometry of the Hilbert scheme $\Hilb^l(\mathbb{P}^2)$. Also, in \cite{Conj-ARZ}, the authors proposed a generalized version of the Brian\c{c}on-Iarrobino Conjecture.

The latest progress regarding the Brian\c{c}on-Iarrobino Conjecture was made by Ramkumar and Sammartano in \cite{Ramkumar-Sammartano} (which in turn was inspired by the work of Haiman, \cite{HAIMAN1998201}, on $N=2$), where the authors had proven the predicted upper bound for the tangent space up to a factor of $4/3$. Our approach is based on the decomposition of the tangent space to the Hilbert scheme, introduced by Ramkumar and Sammartano.

The novelty in our work is that we introduce an error term, which makes the existing upper bound stronger; hence, this immediately resolves the conjecture of Brian\c{c}on and Iarrobino in three dimensions. Our approach will also shed light on proving the conjectural necessary condition, \cite[Conjecture B]{Rezaee-23-Conjectures}, for certain non-tetrahedral cases in three dimensions (the work on this will appear in a sequel, \cite{Mackenzie-Rezaee2}). {This may also hint at resolving the Brian\c{c}on-Iarrobino in higher dimensions, which will be considered in our future work.}

\begin{remark}
    Throughout the paper, we can replace $\BC$ by any characteristic $0$ field. Also, we only consider the minimal generators of ideals. 
\end{remark}

\subsection{Basic definitions} We recall the basic definitions.

To a $0$-dimensional ideal\footnote{An ideal $I$ such that the quotient $\BC[x,y,z]/I$ is Artinian (zero-dimensional).} $I$ in $R=\BC[x,y,z]$, we associate the corresponding point $[I]$ in the Hilbert scheme $\CH=\Hilb^l(\BA^3)$. We denote the dimension of the tangent space to the Hilbert scheme $\CH$ at $[I]$  by $T(I)$.

For a $0$-dimensional Borel-fixed ideal $I=(x^{m_1},y^{m_2},z^{m_3},\text{mixed generators})$, the exponents $m_i$ are called \emph{pure exponents}.

\begin{definition}[Maximal singularity]
    By a point $[I]$ in $\CH=\Hilb^l(\BA^3)$ \emph{having the maximal singularity} we mean that $T(I)\geq T(J)$ for any $[J]\in\CH$. 
    
    By convention, we use \emph{maximum singularity} if there is only one option.

\end{definition}

The following definition is crucial.

\begin{definition}[Borel-fixedness]\label{Def:Borel-fixedness} An ideal $I$ is called a Borel-fixed ideal,
if for any generator $g$ of $I$, we have 
\begin{itemize}
    \item if $z|g$, then the quotients $\frac{xg}{z}, \frac{yg}{z} \in I$, 
    \item if $y|g$, then the quotient  $\frac{xg}{y} \in I$. 
    \end{itemize}
\end{definition}

\begin{remark}\label{rem:Borel-fixed only}
    Note that for any $l\in \BZ^{\geq 1}$, there exists a Borel-fixed ideal $I$ (Definition \ref{Def:Borel-fixedness}) of colength $l$ which corresponds to a point of $\Hilb^l(\BA^N)$  with the maximal singularity. So, in this article, we only focus on Borel-fixed ideals.
    \end{remark}

\subsection{Main results} We prove the following results in Section \ref{Section: proofs}.

\begin{thm}[Upper bound]\label{thm:upperbound}
    Let 
    \begin{align*}
        I=(x^{m_1},y^{m_2},z^{m_3},\text{mixed generators})
    \end{align*}
    be a 0-dimensional Borel-fixed ideal in $\BC[x,y,z]$ of colength $l=\binom{k+2}{3}+\Delta$, where $k\in\BZ^{\geq1}$ and $0\leq \Delta \leq \binom{k+2}{2}-{1}$. Then 
    \begin{align*}
        T(I)
        \leq 
    (2m_1+1)l-2\binom{m_1+2}{4}. 
    \end{align*}
\end{thm}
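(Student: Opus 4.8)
The plan is to run the tangent-space decomposition of Ramkumar--Sammartano and to feed into it a new, strictly negative error term forced by the pure power $x^{m_1}$. First I would fix the combinatorics of the Borel-fixed monomial ideal $I$ via its $z$-slices: for each $c\ge 0$ let $\lambda^{(c)}$ be the partition recording the monomials $x^ay^b$ with $x^ay^bz^c\notin I$, so that $\lambda^{(0)}\supseteq\lambda^{(1)}\supseteq\cdots$ with $\sum_c|\lambda^{(c)}|=l$, each $\lambda^{(c)}$ fits in $m_1$ columns (because $x^{m_1-1}\notin I$ while $x^{m_1}\in I$), and Borel-fixedness forces every row of $\lambda^{(c)}$ to shrink by at most one in passing to $\lambda^{(c+1)}$. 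Dually, applying the Borel moves ``$x^ay^bz^c\notin I,\ a\ge1\implies x^{a-1}y^{b+1}z^c,\ x^{a-1}y^bz^{c+1}\notin I$'' repeatedly to the standard monomial $x^{m_1-1}$ forces the simplex $\Sigma=\{x^ay^bz^c:a+b+c\le m_1-1\}$ to consist entirely of standard monomials. I would record the elementary identity $\sum_{x^ay^bz^c\in\Sigma}a=\binom{m_1+2}{4}$ and its consequence $\binom{m_1+2}{3}\le l$ (equivalently $m_1\le k$ under the stated bound on $\Delta$), since this simplex is where the saving comes from.

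Next I would recall the Ramkumar--Sammartano decomposition of $T(I)=\dim_{\BC}\Hom_R(I,R/I)$ as a sum of contributions indexed by the minimal generators of $I$, the contribution of a generator $g$ being the number of standard monomials lying in an explicit box $B_g$ attached to $g$ and to the slice structure; crudely estimating these box contributions recovers the Ramkumar--Sammartano bound of the shape $T(I)\le(2m_1+1)l$, which is exactly the inequality that loses the factor $4/3$. The new point is that this crude estimate systematically over-counts. When the boxes $B_g$ are honestly intersected with the standard-monomial staircase, one can show that the total over-count is at least $2\sum_{x^ay^bz^c\ \mathrm{standard}}a$ --- concretely, each standard monomial $x^ay^bz^c$ loses at least $2a$ of the candidate assignments $g\mapsto x^ay^bz^c$ to syzygies of $I$, the syzygies being produced from the two Borel constraints above. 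Subtracting this error term and then using $\Sigma\subseteq\{\text{standard monomials}\}$ and $a\ge0$ yields
\begin{align*}
T(I)\ \le\ (2m_1+1)l-2\!\!\sum_{x^ay^bz^c\ \mathrm{standard}}\!\!a\ \le\ (2m_1+1)l-2\binom{m_1+2}{4},
\end{align*}
which is the claim; the hypothesis $l=\binom{k+2}{3}+\Delta$, $0\le\Delta\le\binom{k+2}{2}-1$, serves only to fix the regime $1\le m_1\le k$ in which this bound is later optimized.

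The step I expect to be the real obstacle is the refined count itself: showing the over-count is at least $2a$ per standard monomial (hence at least $2\binom{m_1+2}{4}$ in total) for \emph{every} Borel-fixed $I$ of pure exponent $m_1$, not merely for $(x,y,z)^{m_1}$, where equality holds throughout. Concretely this requires producing, for each minimal generator $g$ and each standard monomial $\sigma\in B_g$ of $x$-exponent $a$, a supply of $2a$ distinct obstructions --- syzygies of $I$ killing $g\mapsto\sigma$, or earlier occurrences of $\sigma$ in the decomposition forcing a cancellation --- and turning these into an honest inequality that survives the summation over all generators. The rigidity of Borel-fixedness (consecutive slices differing by at most one in each row, and the simplex $\Sigma$ sitting inside every slice picture) is exactly the input that manufactures these obstructions; once the local count is in place, the slice bookkeeping, the identity $\sum_{\Sigma}a=\binom{m_1+2}{4}$, and the final assembly are routine, which is what keeps the proof short.
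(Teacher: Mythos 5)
Your overall strategy --- start from the Ramkumar--Sammartano slice bound of shape $(2m_1+1)l$ and subtract an error term supported on the simplex $\Sigma=\{a+b+c\le m_1-1\}$ of standard monomials forced by Borel-fixedness --- is the right one, and several ingredients are correct and match the paper: $\Sigma$ does consist of standard monomials, the identity $\sum_{\Sigma}a=\binom{m_1+2}{4}$ is what converts the saving into the stated binomial, and the hypothesis on $l$ and $\Delta$ indeed only pins down the regime $m_1\le k$ used later. But the proof has a genuine gap, and you have located it yourself: the assertion that ``the total over-count is at least $2\sum_{\sigma\ \mathrm{standard}}a(\sigma)$,'' i.e.\ that each standard monomial of $x$-exponent $a$ supports at least $2a$ obstructions, is exactly the content of the theorem, and it is introduced with ``one can show'' and then conceded in your last paragraph to be ``the real obstacle.'' Nothing in the proposal constructs these obstructions, shows they are distinct, or shows they survive the summation; moreover the starting point is slightly misremembered --- in three dimensions there is no per-generator ``box $B_g$'' formula for $T(I)$ of Haiman type, and the decomposition actually used is $I=\bigoplus_i x^iI_i$ with $T(I)$ controlled by the groups $\mathrm{Hom}(I_i,S/I_j)$, so the object whose over-count you must bound is never precisely defined.

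For comparison, the paper supplies the missing $2\binom{m_1+2}{4}$ through two separate mechanisms, each worth $\binom{m_1+2}{4}$, and neither is a per-standard-monomial count. First, for $i>j$ it exhibits $\binom{i-j}{2}$ \emph{zero vectors}: classes $\alpha$ with $\alpha_y,\alpha_z\ge0$ and $\alpha_y+\alpha_z\le i-j-2$ for which $(\tilde{I_i}+\alpha)\setminus\tilde{I_j}$ is a single bounded component in the slice picture, yet the corresponding three-dimensional component of $(\tilde{I}+\alpha)\setminus\tilde{I}$ is unbounded (both facts following from Borel-fixedness via Lemma \ref{Lem:bound}), so these elements of $\mathrm{Hom}(I_i,S/I_j)$ do not extend to $\Hom(I,R/I)$. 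Second, it sharpens the two-dimensional estimate itself to $\mathrm{hom}(I_i,S/I_j)\le l_i+l_j-t(I_i,I_j)$ via an exact ghost-vector formula (Corollary \ref{Cor:An'An} and Lemma \ref{Cor:tLessGhost}), and uses Borel-fixedness again to get $t(I_i,I_j)\ge\binom{j-i+1}{2}$. Your single ``$2a$ per standard monomial'' claim would, if true for all standard monomials rather than just those in $\Sigma$, be strictly stronger than what the paper proves; it may well hold, but it is not established here, and without it the argument does not close. To repair the proposal you would need either to prove your local count (which amounts to redoing the theorem) or to replace it with the two slice-indexed counts above.
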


\begin{definition*}[Function $\psi$]
For fixed $l={k+2 \choose 3}+\Delta$, and any Borel-fixed ideal $$I=(x^{m_1},y^{m_2},z^{m_3},\text{mixed generators})$$ in $\BC[x,y,z]$ of colength $l$,  
we define the function
\begin{align}
    \psi(m_1):=(2m_1+1)l-2{m_1+2 \choose 4}, 
\end{align}
for $\binom{m_1+2}{3}\leq l$.
\end{definition*}

Given this definition, we will prove the following crucial result.

\begin{thm}[Monotonicity of $\psi$]\label{thm:IncreasingU}
    For fixed $l$, the function $\psi$ is strictly increasing.
\end{thm}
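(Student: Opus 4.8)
\section*{Proof proposal}

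The plan is to control the forward difference $\psi(m_1+1)-\psi(m_1)$ directly and show it is strictly positive on the domain $\{m_1:\binom{m_1+2}{3}\le l\}$. Writing out the definition,
\[
\psi(m_1+1)-\psi(m_1)=\bigl(2(m_1+1)+1\bigr)l-2\binom{m_1+3}{4}-(2m_1+1)l+2\binom{m_1+2}{4}=2l-2\left(\binom{m_1+3}{4}-\binom{m_1+2}{4}\right).
\]
By Pascal's rule $\binom{m_1+3}{4}-\binom{m_1+2}{4}=\binom{m_1+2}{3}$ (an identity that also holds in the degenerate range $m_1\in\{0,1\}$, where both binomials on the left side vanish), so
\[
\psi(m_1+1)-\psi(m_1)=2\left(l-\binom{m_1+2}{3}\right).
\]
Thus the whole statement reduces to showing that this quantity is $>0$ whenever both $m_1$ and $m_1+1$ lie in the domain of $\psi$.

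The one point that needs care is the strictness. The domain condition at $m_1$ alone only gives $\binom{m_1+2}{3}\le l$, hence $\psi(m_1+1)-\psi(m_1)\ge 0$, which is not quite enough. Instead I would use that $m_1+1$ is also in the domain, i.e.\ $\binom{m_1+3}{3}\le l$. Since $\binom{m_1+3}{3}=\binom{m_1+2}{3}+\binom{m_1+2}{2}$ and $\binom{m_1+2}{2}\ge 1$ for every $m_1\ge 0$, this forces $\binom{m_1+2}{3}<\binom{m_1+3}{3}\le l$ strictly. Therefore $\psi(m_1+1)-\psi(m_1)=2\bigl(l-\binom{m_1+2}{3}\bigr)>0$, so $\psi$ is strictly increasing on its domain.

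In short, there is no genuine obstacle here: the argument is just the combinatorial identity above together with the observation that two consecutive points of the domain already force the strict inequality, and no analytic estimate is needed. (If one prefers, the same conclusion follows by regarding $\psi$ as a polynomial in a real variable and checking that $\psi'$ is positive on the relevant range, but the discrete computation is cleaner and sidesteps bounding a cubic.)
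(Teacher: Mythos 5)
Your proof is correct, but it takes a genuinely different route from the paper's. The paper substitutes $l=\binom{k+2}{3}+\Delta$, treats $\psi$ as a real polynomial in $m_1$, and shows $\psi'(m_1)=\tfrac{1}{6}(2k^3+6k^2+4k-2m_1^3-3m_1^2+m_1+1+12\Delta)>0$, where the positivity rests on the bound $m_1\leq k$ supplied by Lemma \ref{lem: m1Leqk} (a consequence of Borel-fixedness). You instead compute the exact forward difference $\psi(m_1+1)-\psi(m_1)=2\bigl(l-\binom{m_1+2}{3}\bigr)$ via Pascal's rule and extract strictness purely from the domain condition: if $m_1+1$ is also admissible then $\binom{m_1+2}{3}<\binom{m_1+3}{3}\leq l$. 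Your version is more elementary --- no calculus, no decomposition of $l$, and no appeal to the ideal-theoretic lemma, since the domain restriction $\binom{m_1+2}{3}\leq l$ already encodes $m_1\leq k$ --- and it makes transparent exactly where equality would threaten (a tetrahedral $l$ with $m_1$ at the top of its range, which is harmless because $m_1+1$ then falls outside the domain). What the paper's approach buys is a closed-form derivative tied explicitly to the parameters $k$ and $\Delta$ that recur in the rest of the argument; what yours buys is brevity and a cleaner identification of the defect $2\bigl(l-\binom{m_1+2}{3}\bigr)$ at each step. Both establish the theorem.
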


For a tetrahedral $l$, the upper and lower bounds will coincide, so Theorem \ref{thm:IncreasingU} immediately implies the main conjecture:

\begin{cor}[3D Brian\c{c}on-Iarrobino Conjecture]\label{Cor:BIConj} For $N=3$, the Brian\c{c}on-Iarrobino Conjecture, \cite[Section III]{Bri-Iar} holds: The maximum singularity of  $\text{\:}\mathrm{Hilb}^{{k+2\choose 3}}(\BA^3)$ occurs at $[\mathfrak{m}^k]$.
\end{cor}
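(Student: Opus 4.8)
The plan is to derive the corollary from \cref{thm:upperbound} and \cref{thm:IncreasingU} together with the classical lower bound of Brian\c{c}on and Iarrobino, using one elementary observation on pure exponents. Fix a tetrahedral $l=\binom{k+2}{3}$, so that $\Delta=0$ in the notation of \cref{thm:upperbound}. By \cref{rem:Borel-fixed only} it suffices to bound $T(J)$ for Borel-fixed $[J]\in\CH$ of colength $l$, show this bound equals $\psi(k):=(2k+1)l-2\binom{k+2}{4}$, and then check that $[\mathfrak{m}^k]$ attains it.

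First I would record the bound $m_1\le k$ for the first pure exponent of an arbitrary Borel-fixed $I=(x^{m_1},y^{m_2},z^{m_3},\text{mixed generators})$ of colength $l$. Since $x^{m_1}$ is a minimal generator, $x^{m_1-1}\notin I$; and because the complement of a Borel-fixed monomial ideal is closed under lowering the $x$-exponent in favour of $y$ or $z$ (the reverse of the moves in \cref{Def:Borel-fixedness}, which one checks extends from generators to all monomials of $I$), every monomial of degree $\le m_1-1$ is a standard monomial of $I$. Counting these gives $l\ge\binom{m_1+2}{3}$, so $\psi(m_1)$ is defined and $m_1\le k$. Then \cref{thm:upperbound} yields $T(I)\le\psi(m_1)$, and \cref{thm:IncreasingU} yields $\psi(m_1)\le\psi(k)$, hence $T(J)\le\psi(k)$ for \emph{every} $[J]\in\CH$. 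Moreover, the case $m_1=k$ forces the standard monomials of $I$ to be exactly those of degree $\le k-1$, i.e. $I=\mathfrak{m}^k$; combined with the strict monotonicity of $\psi$, this already shows $[\mathfrak{m}^k]$ is the only point where the upper bound $\psi(k)$ could be attained.

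It then remains to verify the matching lower bound $T(\mathfrak{m}^k)=\psi(k)=(2k+1)\binom{k+2}{3}-2\binom{k+2}{4}$, i.e. that the inequality from \cref{thm:upperbound} is an equality at $[\mathfrak{m}^k]$. This is the original computation of Brian\c{c}on and Iarrobino \cite{Bri-Iar}, also recovered in \cite{Ramkumar-Sammartano}: one computes $\dim_{\BC}\Hom_R(\mathfrak{m}^k,R/\mathfrak{m}^k)$, for instance from the short exact sequence $0\to\mathfrak{m}^k\to R\to R/\mathfrak{m}^k\to 0$ together with the minimal free resolution of $\mathfrak{m}^k$. Putting the two inequalities together, $T(J)\le\psi(k)=T(\mathfrak{m}^k)$ for all $[J]\in\CH$, with equality only at $[\mathfrak{m}^k]$, which is precisely the Brian\c{c}on--Iarrobino statement in dimension three. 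The genuinely hard input is already isolated in \cref{thm:upperbound} (the error-term refinement of the Ramkumar--Sammartano bound) and \cref{thm:IncreasingU}; within the present argument the only remaining obstacle is the bookkeeping of the lower-bound computation, which is routine and classical.
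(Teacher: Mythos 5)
Your proposal is correct and follows essentially the same route as the paper: bound $T(I)\le\psi(m_1)$ via \cref{thm:upperbound}, reduce to $m_1\le k$ (your degree-count argument is a mild variant of the paper's Lemma \ref{lem: m1Leqk}), apply the monotonicity of $\psi$ from \cref{thm:IncreasingU}, and match $\psi(k)$ with the classical value $T(\mathfrak{m}^k)=\binom{k+2}{2}\binom{k+1}{2}$. Your direct derivation of uniqueness (that $m_1=k$ forces $I=\mathfrak{m}^k$ at tetrahedral colength) replaces the paper's citation of \cite[Lemma 1.7]{Rezaee-23-Conjectures} but is the same observation.
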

\setcounter{conj}{2}

Theorem \ref{thm:IncreasingU} also implies the following conjectural necessary condition for a tetrahedral $l$:
\subsection*{Conjecture B \cite[Conjecture B for $N=3$]{Rezaee-23-Conjectures}}\label{necessaryCondition} \textit{Let $I=(x^{m_1},y^{m_2}, z^{m_3},\text{mixed generators})$ be a 0-dimensional Borel-fixed ideal of colength $l$ in $\BC[x,y,z]$, where ${2+k \choose 3}\leq l<{3+k\choose 3}$. If $[I]$ is a maximal singularity of $\text{\:}\mathrm{Hilb}^{l}(\BA^N)$, then $m_1=k$.}

\begin{cor}[Necessary condition]\label{Cor:ConjB}
For $l={k+2 \choose 3}$, the conjectural necessary condition for maximum singularity, \cite[Conjecture B]{Rezaee-23-Conjectures}, holds: the smallest pure exponent is equal to $k$.
\end{cor}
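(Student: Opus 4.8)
The plan is to obtain Corollary~\ref{Cor:ConjB} as a formal consequence of Theorems~\ref{thm:upperbound} and~\ref{thm:IncreasingU}, together with the classical lower bound for the tangent space at $[\mathfrak{m}^k]$ --- the input that makes the conjectured bound sharp, so that for a tetrahedral $l$ the upper and lower bounds coincide. Fix $l=\binom{k+2}{3}$, so that $\Delta=0$ in the notation of Theorem~\ref{thm:upperbound}, and let $[I]$ be a maximal singularity of $\CH=\Hilb^{l}(\BA^3)$ with $I=(x^{m_1},y^{m_2},z^{m_3},\text{mixed generators})$ Borel-fixed, as in the statement of Conjecture~B. We want to show $m_1=k$, i.e.\ to rule out $m_1<k$.

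The first step is the elementary observation that $m_1\le k$. Since $x^{m_1}$ is a minimal generator of $I$, the monomial $x^{m_1-1}$ does not lie in $I$; and because $I$ is Borel-fixed (Definition~\ref{Def:Borel-fixedness}), the set of monomials not lying in $I$ is closed under dividing out a variable and under each of the substitutions $x\mapsto y$, $x\mapsto z$, $y\mapsto z$. Hence this set contains every monomial of $\BC[x,y,z]$ of degree at most $m_1-1$, so $l\ge\binom{m_1+2}{3}$; as $l=\binom{k+2}{3}$ this forces $m_1\le k$, and in particular $\psi(m_1)$ and $\psi(k)$ are both defined. Applying Theorem~\ref{thm:upperbound} to $I=\mathfrak{m}^k$, which has smallest pure exponent $k$ and colength $\binom{k+2}{3}=l$, gives $T(\mathfrak{m}^k)\le\psi(k)$; combined with the matching Brian\c{c}on--Iarrobino lower bound $T(\mathfrak{m}^k)\ge\psi(k)$ (cf.\ \cite{Bri-Iar,Ramkumar-Sammartano}) we get $T(\mathfrak{m}^k)=\psi(k)$.

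Now conclude: since $[\mathfrak{m}^k]\in\CH$, maximality of $[I]$ yields $T(I)\ge T(\mathfrak{m}^k)=\psi(k)$, while Theorem~\ref{thm:upperbound} gives $T(I)\le (2m_1+1)l-2\binom{m_1+2}{4}=\psi(m_1)$; hence $\psi(m_1)\ge\psi(k)$. Since $m_1\le k$ and $\psi$ is \emph{strictly} increasing by Theorem~\ref{thm:IncreasingU}, this is possible only if $m_1=k$, which is precisely the assertion of Conjecture~B for $l=\binom{k+2}{3}$. I do not expect a genuine obstacle at this stage: the real content lies entirely in Theorems~\ref{thm:upperbound} and~\ref{thm:IncreasingU} and in the known lower bound at $[\mathfrak{m}^k]$, and the only new elementary point --- the staircase argument bounding $m_1\le k$ --- is routine for Borel-fixed ideals.
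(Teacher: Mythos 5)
Your proposal is correct, and its engine is the same as the paper's: everything reduces to Theorem~\ref{thm:upperbound}, the strict monotonicity of $\psi$ (Theorem~\ref{thm:IncreasingU}), the bound $m_1\le k$, and the classical identity $T(\mathfrak{m}^k)=\binom{k+2}{2}\binom{k+1}{2}=\psi(k)$. The routing differs slightly. The paper first proves Corollary~\ref{Cor:BIConj} (that the maximum singularity occurs at $[\mathfrak{m}^k]$) and then disposes of Corollary~\ref{Cor:ConjB} in one line by invoking \cite[Lemma~1.7]{Rezaee-23-Conjectures}, which says $\mathfrak{m}^k$ is the \emph{only} Borel-fixed ideal of colength $\binom{k+2}{3}$ with smallest pure exponent $k$. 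You instead run the inequality chain directly: $\psi(k)=T(\mathfrak{m}^k)\le T(I)\le\psi(m_1)$ with $m_1\le k$, so strict monotonicity forces $m_1=k$. This makes the corollary self-contained (no appeal to the external uniqueness lemma, which in any case points in the opposite direction to what is needed here), at the mild cost of reproving the paper's Lemma~\ref{lem: m1Leqk}; your staircase argument for $m_1\le k$ is a correct variant of that lemma, and your ``matching lower bound'' for $T(\mathfrak{m}^k)$ is exactly the classical computation the paper cites from \cite[Proposition~III.4]{Bri-Iar}. One small point of care: quote the value of $T(\mathfrak{m}^k)$ as the known exact formula rather than as a generic ``lower bound,'' since it is the coincidence $T(\mathfrak{m}^k)=\psi(k)$ that makes the argument close.
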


\subsection{Outline of the proof of Brian\c{c}on-Iarrobino Conjecture} The idea is surprisingly simple: Fix a colength $l$. We give an upper bound for the dimension of the tangent space to $\mathrm{Hilb}^l(\BA^3)$ at any given point $[I]$ of the Hilbert scheme by $(2m_1+1)l-2{m_1+2 \choose 4}$, where $m_1$ is the minimal pure exponent of $I$. When $l={k+2 \choose 3}$ and $I=\Fm^k$, this upper bound coincides with $T(I)$, which immediately resolves the desired conjecture.

\subsection*{Acknowledgements} This is an outcome of the Cambridge Summer Research In Mathematics (SRIM) programme 2025.  We are very grateful to Mark Gross and the SRIM for supporting the project. We especially thank Tony Iarrobino, Dominic Joyce and Alessio Sammartano for careful reading and several helpful comments on the first draft. We also thank Alexia Ascott for helpful discussions and Zac Owen and Marek Szuba for their help with various IT matters. OM was supported by the ERC Advanced Grant MSAG, and FR was supported by the UKRI grant EP/X032779/1. We acknowledge the use of Macaulay2 \cite{M2} and Python.

\subsection*{Notation} Let $I$ be a $0$-dimensional ideal in $R=\BC[x,y,z]$.
\begin{center}
    
         \begin{tabular}{ r l }

 $\mathrm{hom}(C,D)$:& The dimension of the space $\mathrm{Hom}_{R}(C,D)$ over $\BC$.\\

          $l(I)$:& The colength of the ideal $I$ which is defined by $\mathrm{hom}(R,R/{I})$.\\

              $[I]$:& The point of the Hilbert scheme corresponding to the defining ideal $I$.\\
          
           $T(I)$:& The dimension of the  tangent space, $\mathrm{Hom}_R(I,R/I)$, to the Hilbert scheme at\\&  $[I]$, which is defined by $\mathrm{hom}(I,R/{I})$.\\

         $\mathfrak{m}$:&The maximal ideal of  $R$ given by  $(x,y,z)$.\\

          $\CP(C)$:& Power set of a set $C$.\\
          $B^{J,J'}_n\setminus A^{J,J'}_n$:& The set of \emph{ghost vectors} (see Definition \ref{Def:ghostVector} and the Definition \ref{Def:AJ}).\\
          $\tilde{I}$:& The set $\{\gamma\in\mathbb{N}^3:\mathbf{x}^\gamma\in I\}$  (inspired by the notation in \cite{Ramkumar-Sammartano}).
          \\
$\tilde{I}_\alpha$:&  The set $(\tilde{I}+\alpha)\setminus\tilde{I}$, for $\alpha$ a vector in $\BZ^3$.

                      \end{tabular}
     \end{center}

\subsection*{Conventions} We have the following conventions.

\begin{itemize}
    \item We assume the convention $\BN=\{0,1,2,\ldots\}$.

    \item  For integers $m<n$, we use the convention that ${m \choose n}=0$.

    \item The ideals we consider in this article are always $0$-dimensional, by this we mean that the quotient $R/I$ is Artinian (zero-dimensional).
\end{itemize}

\section{Background}
In this section, we provide some background material, definitions and statements which will be used in the following section. Let $I$ be a $0$-dimensional monomial ideal in $R=\BC[x,y,z]$. We recall that by Remark \ref{rem:Borel-fixed only}, we only need to focus on Borel-fixed ideals in order to prove our results on maximum singularity.

\begin{definition}[Ghost vector]\label{Def:ghostVector} If $g$ is a generator of a Borel-fixed $I$, and $q$ is a lattice point in the $\mathrm{xy^{-}z}$-octant, with the $y$-coordinate equal to $-1$, under some additional conditions (as in Definition \ref{Def:AJ}), we call the vector from $g$ to $q$, a \emph{ghost vector}. These vectors are equivalent to the elements in $B^{J,J'}_n\setminus A^{J,J'}_n$. See Figure \ref{Fig: Ghost-Zero-Vector} for $I=(x^2,y^2,z^3,xz,yz^2,xy)$ and a ghost vector $(1,-2,-1)$.
    
\end{definition}

\begin{definition}[Zero vector]\label{def:ZeroVector} Let $I$ be a Borel-fixed ideal in $R=\BC[x,y,z]$ of colength $l$, with the minimal  number of generators. Then the vectors from any generator of $I$ to $R/I$ which do not belong to $\Hom(I,R/I)$ are called \emph{zero vectors}. See Figure \ref{Fig: Ghost-Zero-Vector} for $ I=(x^2,y^2,z^3,xz,yz^2,xy)$, and a zero vector $(0,-1,-2)$
    
\end{definition}

 \begin{figure}[h]
 \subcaptionbox*{}[.5
 \linewidth]{%
    \includegraphics[width=\linewidth]{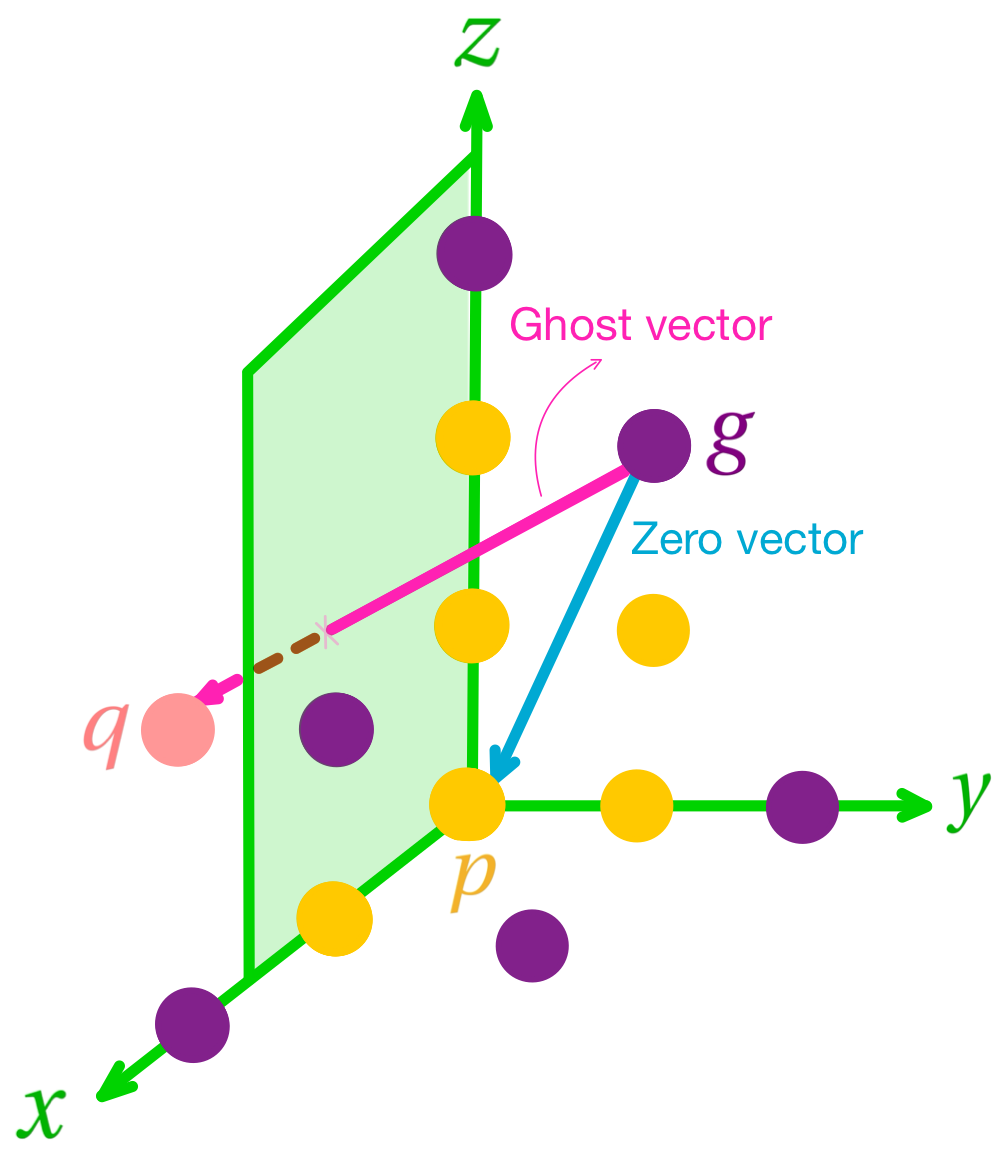}%
  }
  \vspace{-0.6cm}
  \caption{An example of a zero vector $(0,-1,-2)$ and a ghost vector $(1,-2,-1)$.}
  \label{Fig: Ghost-Zero-Vector}
\end{figure}

\subsection{Filtration}Let $I$ be a $0$-dimensional monomial ideal in $\BC[x,y,z]$. As introduced in \cite{Ramkumar-Sammartano}, we use the decomposition of $I$ into monomial ideals in $\BC[y,z]$. 

\begin{definition}\label{Def: decomposition}
    We decompose $I$ an ideal in $\BC[x,y,z]$  into ideals $I_i$ in $\mathbb{C}[y,z]$ as $I=\bigoplus_ix^iI_i$. 
\end{definition}
\begin{example}\label{Ex:first}
    See Figure \ref{Fig: Filtration} for the ideal $I=(x^2,y^2,z^3,xz,yz^2,xy)$, with $I_0=(y^2,z^3,yz^2)$, $I_1=(y,z)$ and $I_2=(1)$.
    
    Also, see Figure \ref{Fig: Filtration'} for the ideal $I=(x^3,y^4,z^6,y^3z,y^2z^3,yz^5,xy^2,xz^4,xyz^2,x^2y,x^2z)$, with $I_0=(y^4,z^6,y^3z,y^2z^3,yz^5)$, $I_1=(y^2,z^4,yz^2)$, $I_2=(y,z)$ and $I_3=(1)$.
  \begin{figure}[h]
 \subcaptionbox*{(i) $I$}[.31\linewidth]{
    \includegraphics[width=\linewidth]{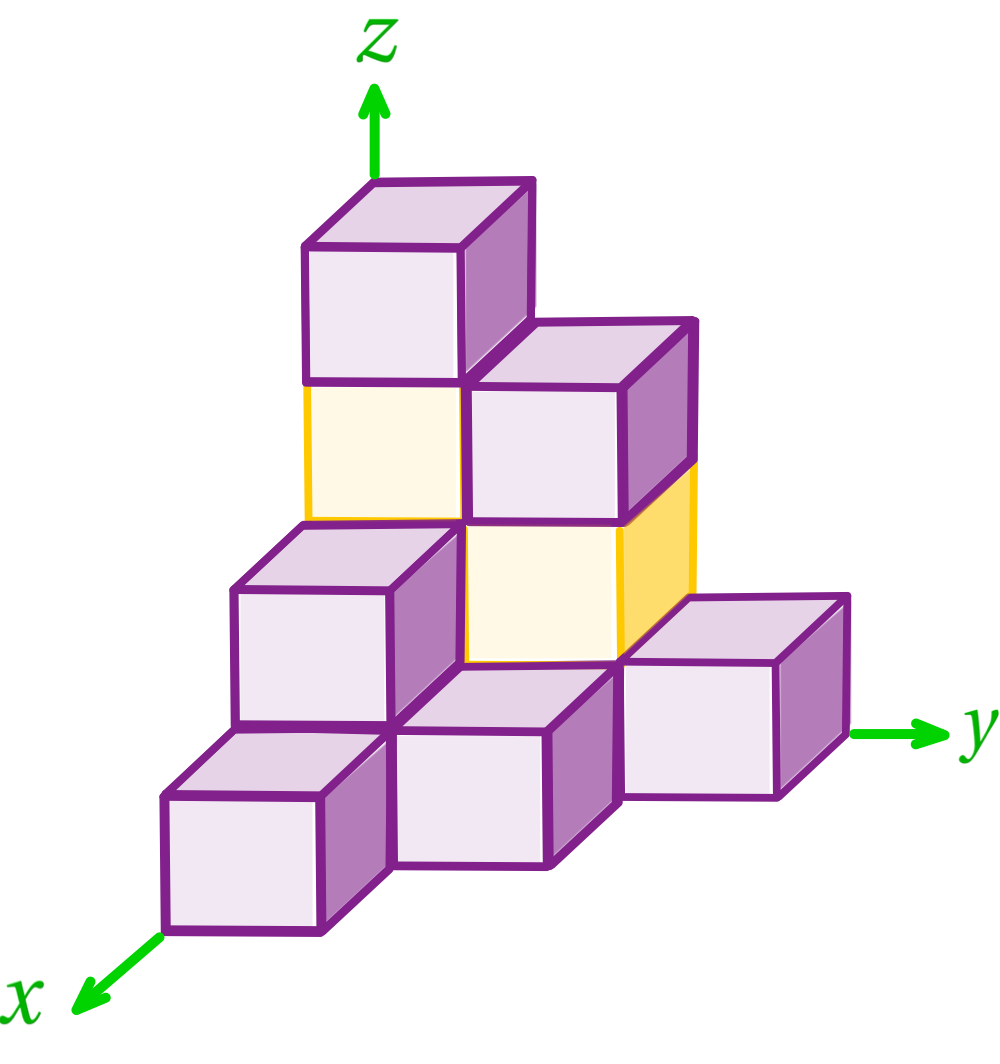}
  }
  \hskip15ex
  \subcaptionbox*{(ii) $I=\bigoplus_i x^iI_i$}[.28\linewidth]{
    \includegraphics[width=\linewidth]{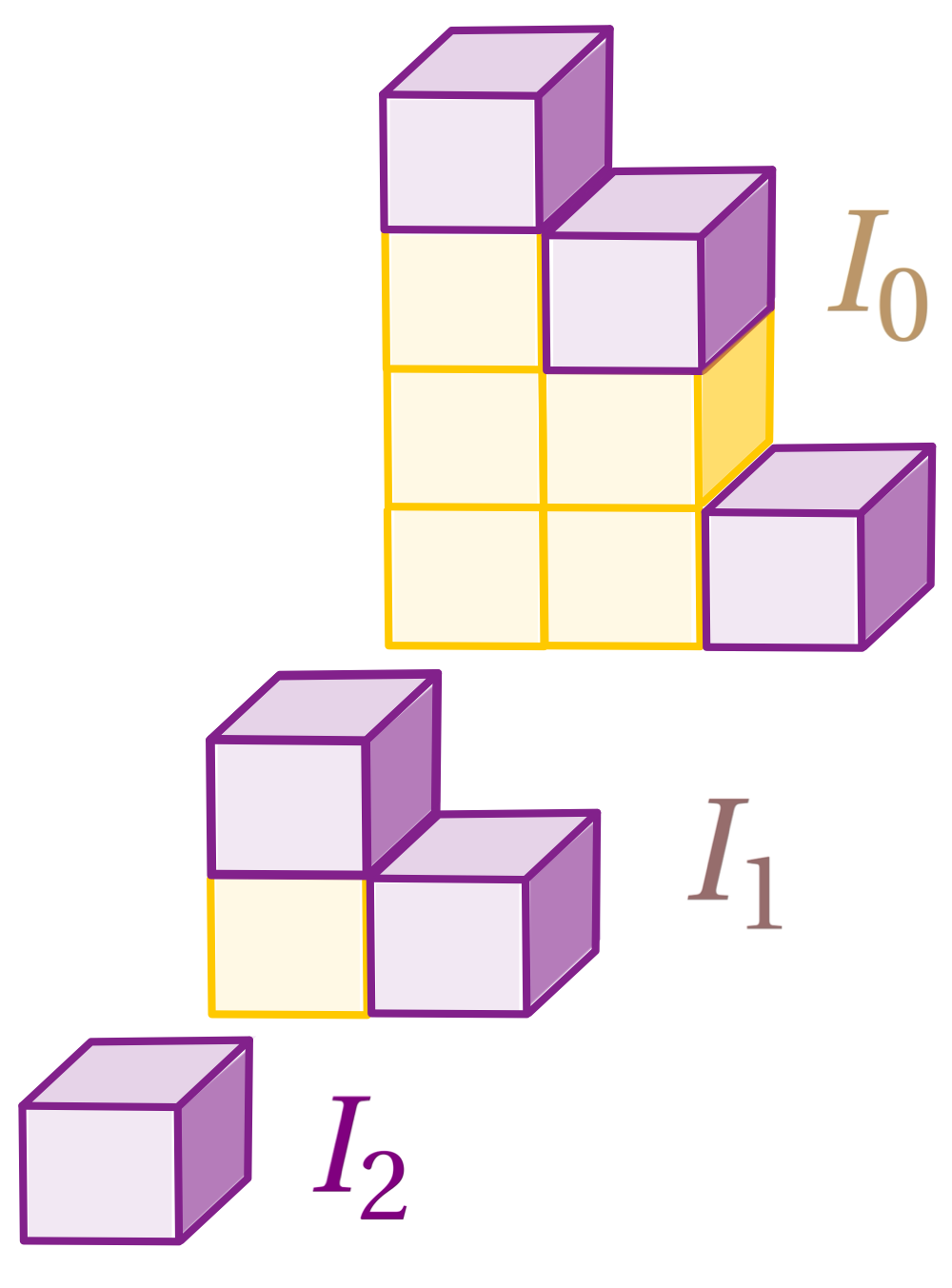}
  }
  \caption{Decomposition of the ideal $I=(x^2,y^2,z^3,xz,yz^2,xy)$.}
  \label{Fig: Filtration}
\end{figure}
 \begin{figure}[h]
  
 \subcaptionbox*{(i) $I$}[.42\linewidth]{
    \includegraphics[width=\linewidth]{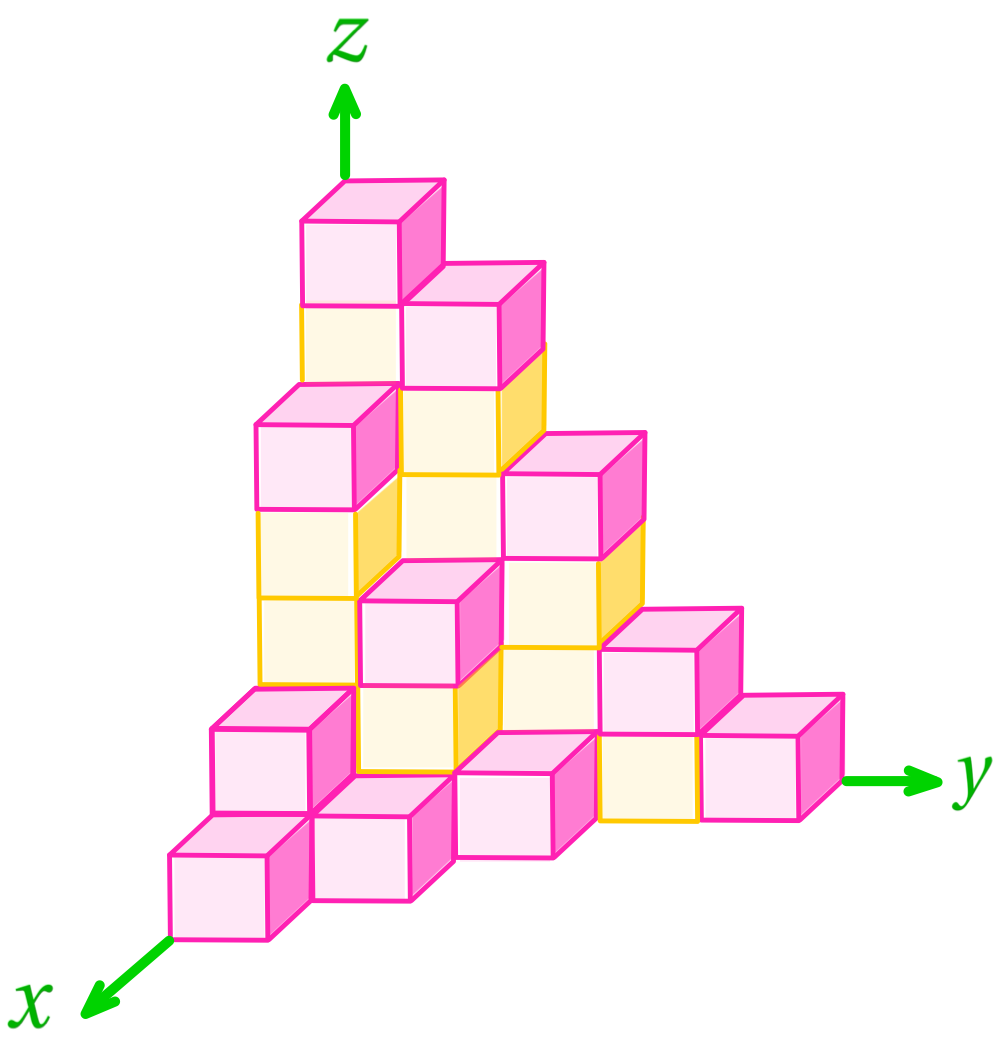}
  }
  \hskip1ex
  \subcaptionbox*{(ii) $I=\oplus_i x^iI_i$}[.49\linewidth]{
    \includegraphics[width=\linewidth]{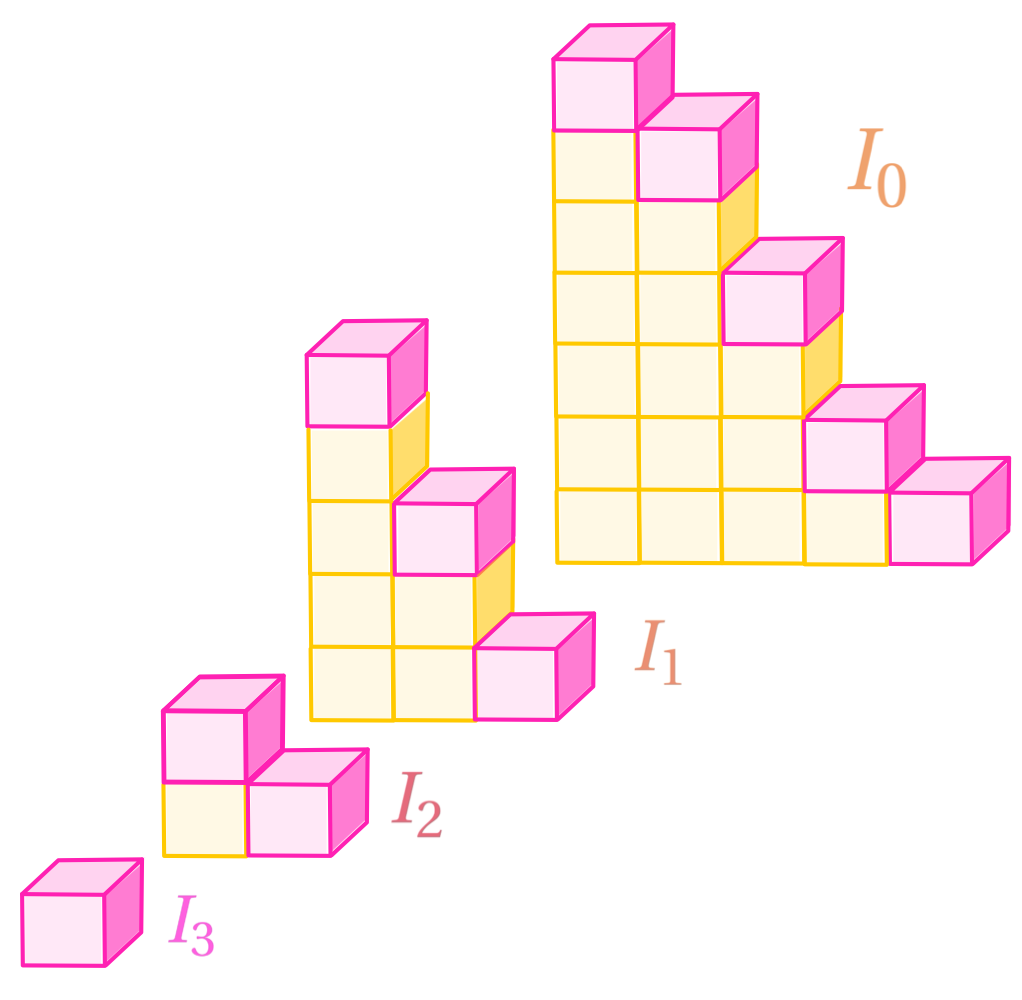}
  }
  \caption{Decomposition of the ideal $I=(x^3,y^4,z^6,y^3z,y^2z^3,yz^5,xy^2,xz^4,$\\$xyz^2,x^2y,x^2z)$}
  \label{Fig: Filtration'}
\end{figure}
\end{example}

Definition \ref{Def: decomposition} provides an incentive for the following lemma.

\begin{lemma}\label{Lem:inequalityL1L2}
    Let $J$ and $J'$ be two $0$-dimensional monomial ideals in $S=\BC[y,z]$ of colengths $l$ and $l'$, respectively. Then, we have
    \begin{align}\label{eq: Ineq}
       \mathrm{hom}(J,S/J')\leq l+l'.
    \end{align}
\end{lemma}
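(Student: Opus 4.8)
The plan is to compute $\mathrm{hom}(J,S/J')$ purely homologically and to reduce everything to one substantive input, the value of $\dim_{\BC}\Ext^2_S(S/J,S)$. Write $S=\BC[y,z]$; since $S$ is regular of Krull dimension $2$ it has global dimension $2$, so $\Ext^{i}_S(-,-)=0$ for $i\geq 3$, and the $0$-dimensionality of $J,J'$ means $S/J$ and $S/J'$ are finite-dimensional over $\BC$, of dimensions $l$ and $l'$. (Note that monomiality will play no role.)

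\emph{Step 1: two long exact sequences.} First I would apply $\Hom_S(-,S/J')$ to $0\to J\to S\to S/J\to 0$. Using $\Hom_S(S,S/J')=S/J'$ and $\Ext^1_S(S,S/J')=0$, the long exact sequence collapses to $0\to \Hom_S(S/J,S/J')\to S/J'\to \Hom_S(J,S/J')\to \Ext^1_S(S/J,S/J')\to 0$, hence
\[
\mathrm{hom}(J,S/J')=l'-\dim_{\BC}\Hom_S(S/J,S/J')+\dim_{\BC}\Ext^1_S(S/J,S/J').
\]
Next, take a finite free resolution $0\to F_2\to F_1\to F_0\to S/J\to 0$ with $F_0=S$; applying $\Hom_S(-,S/J')$ gives a three-term complex whose cohomology computes $\Ext^i_S(S/J,S/J')$ for $i=0,1,2$, and whose Euler characteristic equals $l'\cdot\sum_i(-1)^i\operatorname{rank}F_i=l'\cdot\operatorname{rank}_S(S/J)=0$ (the module $S/J$ being torsion). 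Therefore
\[
\dim_{\BC}\Ext^1_S(S/J,S/J')-\dim_{\BC}\Hom_S(S/J,S/J')=\dim_{\BC}\Ext^2_S(S/J,S/J').
\]
Combining the two displays, $\mathrm{hom}(J,S/J')=l'+\dim_{\BC}\Ext^2_S(S/J,S/J')$, so \eqref{eq: Ineq} is equivalent to the bound $\dim_{\BC}\Ext^2_S(S/J,S/J')\leq l$.

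\emph{Step 2: reduce to $S/J'=S$.} Applying $\Hom_S(S/J,-)$ to $0\to J'\to S\to S/J'\to 0$ and using $\Ext^3_S(S/J,J')=0$, one sees that $\Ext^2_S(S/J,S/J')$ is a quotient of $\Ext^2_S(S/J,S)$, so it suffices to prove $\dim_{\BC}\Ext^2_S(S/J,S)=l$. This is local/Serre duality: $S/J$ is Cohen--Macaulay of dimension $0$ over the $2$-dimensional regular ring $S$, so $\Ext^2_S(S/J,S)$ is its canonical module, which is $\BC$-dual to $S/J$ and hence of dimension $l$. If one prefers a self-contained argument, induct on $l$: for $l\geq 1$ pick a socle element of $S/J$ to obtain $0\to\BC\to S/J\to N\to 0$ with $\dim_{\BC}N=l-1$, apply $\Hom_S(-,S)$, and use $\Ext^1_S(\BC,S)=0$, $\Ext^2_S(\BC,S)\cong\BC$ (Koszul complex of $(y,z)$) and $\Ext^3_S(N,S)=0$ to get $\dim_{\BC}\Ext^2_S(S/J,S)=\dim_{\BC}\Ext^2_S(N,S)+1=l$.

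Putting Steps 1 and 2 together yields $\mathrm{hom}(J,S/J')=l'+\dim_{\BC}\Ext^2_S(S/J,S/J')\leq l'+l$, as desired. The only genuinely non-formal ingredient is the identity $\dim_{\BC}\Ext^2_S(S/J,S)=l$ in Step 2; everything else is bookkeeping with long exact sequences and ranks of free resolutions. Consequently I expect the main (and only minor) obstacle to be in the exposition of that step: deciding whether to invoke local duality / the canonical module directly or to write out the short inductive argument, and — if one works with the natural $\BZ^2$-grading — to keep track of the internal degree shifts, which affect the twists but not the $\BC$-dimensions.
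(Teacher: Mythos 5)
Your proof is correct, but it takes a genuinely different route from the paper. You compute $\mathrm{hom}(J,S/J')=l'+\dim_{\BC}\Ext^2_S(S/J,S/J')$ by an Euler-characteristic argument over the global-dimension-$2$ ring $S$, and then bound $\Ext^2_S(S/J,S/J')$ by the duality statement $\dim_{\BC}\Ext^2_S(S/J,S)=l$; each step checks out (the long exact sequences are set up correctly, and the inductive verification of $\dim_{\BC}\Ext^2_S(S/J,S)=l$ via socle elements is sound). The paper instead proves the lemma by a purely combinatorial bijection: it grades $\Hom_S(J,S/J')$ by the shift vector $\alpha\in\BZ^2$, bijects the $\alpha_z\geq 0$ part with $\BN^2\setminus\tilde{J'}$ (size $l'$) and injects the $\alpha_z<0$ part into $\BN^2\setminus\tilde{J}$ (size $l$) via an auxiliary set $B^{J,J'}_n$ of possibly unbounded components, arriving at the exact identity $\mathrm{hom}(J,S/J')=l+l'-\mathrm{Card}(B^{J,J'}_n\setminus A^{J,J'}_n)$. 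Your argument is shorter, more conceptual, and applies verbatim to non-monomial ideals; what it does not deliver is the explicit combinatorial description of the defect term as a count of ``ghost vectors,'' which is precisely what the paper extracts from its bijection (Corollary \ref{Cor:An'An}) and then bounds below by $t(J,J')$ (Lemma \ref{Cor:tLessGhost}) to obtain the sharpened inequality of Corollary \ref{cor:ineq} that drives the proof of Theorem \ref{thm:upperbound}. If you wanted your route to feed into the rest of the paper, you would still need to identify $\dim_{\BC}\ker\bigl(\Ext^2_S(S/J,S)\to\Ext^2_S(S/J,S/J')\bigr)$ combinatorially, which essentially reproduces the paper's bijective bookkeeping.
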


\begin{remark}Note that we can still apply the method of calculating $\mathrm{hom}(J,S/J)$ used in \cite{Ramkumar-Sammartano} to calculate $\mathrm{hom}(J,S/J')$. Thus, the dimension of $\mathrm{hom}(J,S/J')$ in Lemma \ref{Lem:inequalityL1L2} will be calculated by summing the number of bounded connected components of $(\tilde{J}+\alpha)\setminus \tilde{J'}$ across all $\alpha=(\alpha_y,\alpha_z)\in\BZ^2$.
\end{remark}
Before proving Lemma \ref{Lem:inequalityL1L2}, we have the following definition.

\begin{definition}\label{Def:AJ}
The space $H^{J,J'}:=\mathrm{Hom}(J,S/J')$ is graded by $\mathbb{Z}^2$ according to the value of $\alpha$ used. We divide $H^{J,J'}$ into two subspaces, $H^{J,J'}=H^{J,J'}_p\oplus H^{J,J'}_n$ as
\[H^{J,J'}_p:=\bigoplus_{\alpha\in\mathbb{Z}\times\mathbb{N}}H_\alpha,\]
\[H^{J,J'}_n:=\bigoplus_{\alpha\in\mathbb{Z}\times\mathbb{Z}^-}H_\alpha.\]
We will see that that $\dim(H^{J,J'}_p)=l'$ and $\dim(H^{J,J'}_n)\leq l$. 

Let
\begin{align*}   
A^{J,J'}:=\{(U,\alpha)\in\mathcal{P}(\mathbb{N}^2)\times\mathbb{Z}^2:U\text{ is a connected component of }(\tilde{J}+\alpha)\setminus \tilde{J'}\}.\end{align*}

Lowering the grading $n,p$ to this set,  we get $A^{J,J'}_p$ and $A^{J,J'}_n$ corresponding to bases of $H^{J,J'}_p$ and $H^{J,J'}_n$ as follows. 
\[A^{J,J'}_p:=\{(U,\alpha)\in\mathcal{P}(\mathbb{N}^2)\times\mathbb{Z}^2:U\text{ is a connected component of }(\tilde{J}+\alpha)\setminus \tilde{J'},\alpha_z\geq 0\},\]
\[A^{J,J'}_n:=\{(U,\alpha)\in\mathcal{P}(\mathbb{N}^2)\times\mathbb{Z}^2:U\text{ is a connected component of }(\tilde{J}+\alpha)\setminus \tilde{J'},\alpha_z<0\}.\]

Next, we define the set $B^{J,J'}_n$ as an extension of $A^{J,J'}_n$ by
\begin{align*}    
B^{J,J'}_n=:\{(U,\alpha)\in\mathcal{P}(\mathbb{Z}\times\mathbb{N})\times\mathbb{Z}^2:U\text{ is a connected component of } ( \tilde{J}+\alpha)\setminus\tilde{J'},\alpha_z<0\}.\end{align*}
Note that $B^{J,J'}_n$ corresponds to the not necessarily bounded components of $(\tilde{J}+\alpha)\setminus\tilde{J'}$ with $\alpha_z<0$, which are contained entirely in the upper-half-plane.

\end{definition}

\begin{proof}[Proof of Lemma \ref{Lem:inequalityL1L2}]
 
First, we biject $A^{J,J'}_p$ with $\mathbb{N}^2\setminus\tilde{J'}$, which in turn corresponds to the natural basis of $S/J'$; hence $A^{J,J'}_p$ will be of
size $l'$. To construct such a bijection,  for $(U,\alpha)\in A^{J,J'}_p$, we choose  $\gamma={(\gamma_y,\gamma_z})$ in $\tilde{J}$ with maximal $z$-coordinate and $\gamma+\alpha\in U$. If there is more than one option, we take the one with minimal $y$-coordinate. Similarly, we have that $\gamma_y+\alpha_y$ is the smallest $y$-coordinate of any element of $U$.

Define
\begin{align*}
    f:A^{J,J'}_p&\rightarrow \mathbb{N}^2\setminus\tilde{J'}\\
    (U,\alpha)&\mapsto(\gamma_y+\alpha_y,\alpha_z)
\end{align*}

\emph{Claim 1.} $f$ is a well-defined bijection.
\begin{proof}[Proof of Claim 1] We prove this in three steps:

\begin{enumerate}
    \item $f$ is well-defined: $\gamma_y+\alpha_y\in\BN$, and $\alpha_z\in \BN$ are clear. If $f(U,\alpha)\in\tilde{J'}$, then since $\gamma_z\geq0$, we must have $\alpha+\gamma\in\tilde{J'}$ and $\alpha+\gamma\in U$; contradiction.

     \item $f$ is a surjection: Given an element $\beta=(\beta_y,\beta_z)$ in $\mathbb{N}^2\setminus\tilde{J'}$, we have $\beta_z\geq 0$. Hence, we can define $\alpha_z:=\beta_z$. Now, we define $\alpha_y$ in the following way. Take $\alpha_y$ to be maximal with the property that there are elements with $y$-coordinate equal to $\beta_y$ in $(\tilde{J}+\alpha)\setminus\tilde{J'}$. Now, take $U$ to be the unique component of $(\tilde{J}+\alpha)\setminus\tilde{J'}$ which contains the elements with  $y$-coordinate equal to $\beta_y$. If $U$ contains elements with $y$-coordinate less than $\beta_y$, we arrive at a contradiction because we would have two points of coordinates $(\beta_y,s),(\beta_y-1,s)\in U$ so we can increase $\alpha_y$ by at least 1. Note that since $\alpha_z\geq 0$, $U$ is restricted to the upper-half-plane. On the other hand, since  $\beta_y\geq 0$, $U$ is also restricted to the right-half-plane. Hence, $U$ is bounded, and we have $f(U,\alpha)=\beta$.

     \item $f$ is an injection: Suppose $\beta=(\gamma_y+\alpha_y,\alpha_z)=f(U,\alpha)=f(\bar{U},\bar{\alpha})=(\bar{\gamma}_y+\bar{\alpha}_y,\bar{\alpha}_z)$, for some $\bar{\alpha}=(\bar{\alpha}_y,\bar{\alpha}_z)$, $\bar{\gamma}=(\bar{\gamma}_y,\bar{\gamma}_z)$ and $\bar{U}$, defined similarly as $\alpha$, $\gamma$ and $U$. It is clear that $\alpha_z=\bar{\alpha}_z$. For the sake of contradiction, suppose $\alpha_y<\bar{\alpha}_y$; then $\gamma_y>\bar{\gamma}_y$. Now $\bar{\gamma}+\bar{\alpha}=\bar{\gamma}+\alpha+r\mathbf{e}_y\notin\tilde{J}'$ where $r=\bar{\alpha}_y-\alpha_y>0$. We deduce that $\bar{\gamma}+\alpha\in U$, which contradicts the definition of $\gamma$. Hence, $\alpha=\bar{\alpha}$. Finally, $U$ and $\bar{U}$ are the unique connected component of $(\tilde{J}+\alpha)\setminus\tilde{J}'$, which contain elements with $y$-coordinate equal to $\beta_y$. Thus $U=\bar{U}$, and we have an injection as required.
\end{enumerate}
\end{proof}

We can similarly biject $B^{J,J'}_n$ with $\mathbb{N}^2\setminus\tilde{J}$. For  this purpose, associated to $(U,\alpha)\in B^{J,J'}_n$, we choose $\gamma=(\gamma_y,\gamma_z)$ in $\tilde{J}$ such that $\gamma+\alpha\in U$, and $\gamma$ has maximal $y$-coordinate. If there are multiple options, we take the one with minimal $z$-coordinate. 

Define
\begin{align*}
    g:B^{J,J'}_n&\rightarrow \mathbb{N}^2\setminus\tilde{J}\\
    (U,\alpha)&\mapsto (\gamma_y,-\alpha_z-1)
\end{align*}

\emph{Claim 2.} $g$ is a well-defined bijection.

\begin{proof}[Proof of Claim 2] Again, we proceed with the proof in three steps:

\begin{enumerate}
    \item $g$ is well-defined: It is clear that $\gamma_y \in \BN$ and $-\alpha_z-1\in\BN$. Since $U$ is in the upper-half-plane, we have $\gamma_z+\alpha_z+1> 0$. Therefore, $\gamma_z>-\alpha_z-1$, and so $f(U,\alpha)\notin\tilde{J}$.
    \item $g$ is a surjection: Given an element $\beta$ in $\mathbb{N}^2\setminus\tilde{J}$, we can find $\gamma=(\gamma_y,\gamma_z)\in J$ such that $\gamma_y=\beta_y$ and $\gamma_z$ is minimal. We set $\alpha_z:=-\beta_z-1$. Then, we can find the unique $\lambda$ such that $(\lambda,\gamma_z+\alpha_z)\notin\tilde{J'}$ but $(\lambda+1,\gamma_z+\alpha_z)\in\tilde{J'}$. Then, we can uniquely define $\alpha_y:=\lambda-\gamma_y$ and $\alpha=(\alpha_y,\alpha_z)$. Choosing $U$ to be the connected component of $(\tilde{J}+\alpha)\setminus\tilde{J}'$ containing $\alpha+\gamma$, then as before, $\gamma_z+\alpha_z$ is the minimal $z$-coordinate of an element of $U$. So, this gives $f(U,\alpha)=\beta$.
    \item $g$ is an injection: This part is similar to the injection of $f$ as above.
\end{enumerate}
    
\end{proof}

Combining Claim 1 and Claim 2, we have
\begin{align*}
    \mathrm{hom}(J,S/J')= l+l'-\mathrm{Card}( B^{J,J'}_n\setminus A^{J,J'}_n),
    \end{align*}
which implies the desired inequality.
\end{proof}

\begin{remark}
    The bound \eqref{eq: Ineq} is related to  \cite[Proposition 4.1]{Ramkumar-Sammartano}. We prove \eqref{eq: Ineq}  via a different method, which is purely combinatorical. This approach allows us to improve the bounds by subtracting further terms, which we can evaluate easily in order to eventually prove the original conjecture.
\end{remark}

\begin{corollary} \label{Cor:An'An}
    For $J$ and $J'$ as above, we have
    \begin{align}
        \mathrm{hom}(J,S/J')=l+l'-\mathrm{Card}( B^{J,J'}_n\setminus A^{J,J'}_n)
    \end{align}
\end{corollary}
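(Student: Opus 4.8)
The plan is to observe that this identity is exactly the displayed equation obtained at the end of the proof of Lemma \ref{Lem:inequalityL1L2}, so the corollary requires no genuinely new argument; I would simply isolate the three ingredients of that proof and reassemble them as an equality rather than stopping at the inequality \eqref{eq: Ineq}.

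First I would recall that $H^{J,J'}=\mathrm{Hom}(J,S/J')$ is $\mathbb{Z}^2$-graded by the value of $\alpha$ and splits as $H^{J,J'}_p\oplus H^{J,J'}_n$, with bases indexed by $A^{J,J'}_p$ and $A^{J,J'}_n$ respectively (the bounded connected components of $(\tilde{J}+\alpha)\setminus\tilde{J'}$, sorted according to the sign of $\alpha_z$), so that $\mathrm{hom}(J,S/J')=\mathrm{Card}(A^{J,J'}_p)+\mathrm{Card}(A^{J,J'}_n)$. Next I would invoke Claim 1 from the proof of Lemma \ref{Lem:inequalityL1L2}: the map $f$ is a bijection between $A^{J,J'}_p$ and $\mathbb{N}^2\setminus\tilde{J'}$, the latter being the natural monomial basis of $S/J'$, whence $\mathrm{Card}(A^{J,J'}_p)=l'$. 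Then I would invoke Claim 2: the map $g$ is a bijection between $B^{J,J'}_n$ and $\mathbb{N}^2\setminus\tilde{J}$, whence $\mathrm{Card}(B^{J,J'}_n)=l$; since by construction $A^{J,J'}_n\subseteq B^{J,J'}_n$ is precisely the subset of bounded components among those counted by $B^{J,J'}_n$, we get $\mathrm{Card}(A^{J,J'}_n)=l-\mathrm{Card}(B^{J,J'}_n\setminus A^{J,J'}_n)$. Adding the two cardinalities gives $\mathrm{hom}(J,S/J')=l+l'-\mathrm{Card}(B^{J,J'}_n\setminus A^{J,J'}_n)$, which is the assertion; the bound \eqref{eq: Ineq} then follows at once by dropping the nonnegative term $\mathrm{Card}(B^{J,J'}_n\setminus A^{J,J'}_n)$.

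There is essentially no obstacle here beyond the bookkeeping already carried out for Lemma \ref{Lem:inequalityL1L2}; the only point worth a sentence is the containment $A^{J,J'}_n\subseteq B^{J,J'}_n$, i.e.\ that enlarging the ambient lattice region from $\mathbb{N}^2$ to $\mathbb{Z}\times\mathbb{N}$ in the range $\alpha_z<0$ only introduces the connected components that fail to be bounded and neither merges nor deletes any of the bounded ones — which is immediate from the definitions of $A^{J,J'}_n$ and $B^{J,J'}_n$. Accordingly I would present the corollary as a one-line consequence of the proof of Lemma \ref{Lem:inequalityL1L2}, recording the exact count because it is the precise form (an equality with an explicitly computable error term) that is needed to sharpen the three-dimensional estimates later on.
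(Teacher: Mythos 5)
Your proposal is correct and follows exactly the route the paper takes: the corollary is read off from the final display in the proof of Lemma~\ref{Lem:inequalityL1L2}, using the bijection $f$ to get $\mathrm{Card}(A^{J,J'}_p)=l'$, the bijection $g$ to get $\mathrm{Card}(B^{J,J'}_n)=l$, and the containment $A^{J,J'}_n\subseteq B^{J,J'}_n$ to convert the count of $A^{J,J'}_n$ into $l-\mathrm{Card}(B^{J,J'}_n\setminus A^{J,J'}_n)$. Your explicit remark that passing from $\mathbb{N}^2$ to $\mathbb{Z}\times\mathbb{N}$ only adds unbounded components without merging or deleting bounded ones is a point the paper leaves implicit, and is a worthwhile sentence to include.
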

\begin{proof}
    This is immediate from the proof of Lemma \ref{Lem:inequalityL1L2}.
\end{proof}

Moreover, Lemma \ref{Lem:inequalityL1L2} immediately implies the following well-known result about the Hilbert scheme of $\BA^2$.
\begin{corollary}
     $\Hilb^l(\mathbb{A}^2)$ is smooth.
\end{corollary}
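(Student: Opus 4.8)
\emph{Proof sketch.} The plan is to sandwich $\dim T_{[J]}\Hilb^l(\BA^2)$ between $2l$ from above, using Lemma~\ref{Lem:inequalityL1L2}, and $2l$ from below, using the classical fact that $\Hilb^l(\BA^2)$ has dimension $2l$. Recall first that for a $0$-dimensional ideal $J$ of colength $l$ in $S=\BC[y,z]$ the tangent space to $\Hilb^l(\BA^2)$ at $[J]$ is $\Hom_S(J,S/J)$, so $\dim T_{[J]}\Hilb^l(\BA^2)=\mathrm{hom}(J,S/J)=T(J)$.

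I would then invoke Remark~\ref{rem:Borel-fixed only} in the case $N=2$: there is a Borel-fixed, hence monomial, ideal $I_{0}\subseteq S$ of colength $l$ with $T(J)\le T(I_{0})$ for every $[J]\in\Hilb^l(\BA^2)$. Applying Lemma~\ref{Lem:inequalityL1L2} with $J=J'=I_{0}$ and $l'=l$ yields $T(I_{0})=\mathrm{hom}(I_{0},S/I_{0})\le l+l=2l$, and therefore $\dim T_{[J]}\Hilb^l(\BA^2)\le 2l$ at every point of the Hilbert scheme.

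For the matching lower bound I would use that $\Hilb^l(\BA^2)$ contains the locally closed locus parametrizing $l$ distinct unordered points, which is smooth and irreducible of dimension $2l$; it is classical that this locus is dense, i.e. that every length-$l$ subscheme of $\BA^2$ is smoothable, so $\Hilb^l(\BA^2)$ is irreducible of dimension $2l$. Hence $\dim_{[J]}\Hilb^l(\BA^2)\ge 2l$, and a fortiori $\dim T_{[J]}\Hilb^l(\BA^2)\ge 2l$, at every $[J]$. Comparing the two inequalities, the tangent space has dimension exactly $2l=\dim\Hilb^l(\BA^2)$ at every point, which is precisely smoothness.

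The only genuinely nontrivial ingredient besides Lemma~\ref{Lem:inequalityL1L2} is the lower bound $\dim_{[J]}\Hilb^l(\BA^2)\ge 2l$ — equivalently, smoothability of finite subschemes of the plane — and this is the step I would flag as the heart of the matter, even though it is entirely standard (and using it here is not circular with Fogarty's theorem, since we extract only the dimension, not the smoothness). Alternatively, one could try to sharpen Corollary~\ref{Cor:An'An} to show that $B^{J,J}_n=A^{J,J}_n$ for every $0$-dimensional monomial ideal $J$, which would give $\mathrm{hom}(J,S/J)=2l$ on the nose; but that refinement alone would still not bound the tangent space at non-monomial points, so the dimension input cannot be circumvented.
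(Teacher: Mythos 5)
Your proposal is correct and takes essentially the same route as the paper: apply Lemma~\ref{Lem:inequalityL1L2} with $J=J'$ to obtain $T(J)\le 2l$, then combine with the classical lower bound $2l\le T(J)$ coming from $\dim\Hilb^l(\BA^2)=2l$. The paper's proof is a one-line compression of exactly this argument (including the implicit reduction to monomial ideals), so no further comparison is needed.
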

\begin{proof}
    Taking $I=J=J'$ we get the inequality $T(I)\leq 2l$ which suffices (since $2l \leq T(I)$) to deduce the smoothness of $\Hilb^l(\mathbb{A}^2)$.
\end{proof}
\begin{definition}
    Let $J$ be a $0$-dimensional monomial ideal in $\BC[y,z]$ we define its \emph{height} to be $h$ such that $z^h$ is a minimal generator.
\end{definition}
\begin{definition}
    Let $I_0,I_1$ be two $0$-dimensional monomial ideals in $\BC[y,z]$. We say that $I_0$ is \emph{taller than} $I_1$ (or equivalently, $I_1$ is \emph{shorter than} $I_0$) if the height of $I_0$ is greater than that of $I_1$.
\end{definition}

\begin{definition} Let $I_0,I_1$ be two $0$-dimensional  monomial ideals in $\BC[y,z]$ such that $I_1$ has height $h$. We define $t(I_0,I_1)$ to be the size of the set $\{\beta\in\mathbb{N}^2\setminus\tilde{I_0}:\beta_z\geq h\}$.

    Note that if $I_1$ is taller than $I_0$, then $t(I_0,I_1)=0$.
\end{definition}

\begin{lemma}\label{Cor:tLessGhost}
 For $J$ and $J'$ as above, we have $t(J,J') \leq \mathrm{Card}(B^{J,J'}_n\setminus A^{J,J'}_n)$.
\end{lemma}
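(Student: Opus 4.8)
I want to exhibit an injection from the index set defining $t(J,J')$ --- namely the lattice points $\beta\in\mathbb{N}^2\setminus\tilde{J'}$ with $\beta_z\geq h$, where $h$ is the height of $J'$ --- into the ghost set $B^{J,J'}_n\setminus A^{J,J'}_n$. The bijection $g\colon B^{J,J'}_n\to\mathbb{N}^2\setminus\tilde{J}$ from the proof of Lemma \ref{Lem:inequalityL1L2} already identifies the ghost components with a concrete lattice set, so the strategy is to trace through $g$ and decide which fibre over a given $\beta$ lands in the \emph{bounded} part $A^{J,J'}_n$ and which stays ``ghostly''. Recall a component $(U,\alpha)$ sits in $B^{J,J'}_n\setminus A^{J,J'}_n$ precisely when it is unbounded, equivalently (since it is confined to the upper half-plane by $\alpha_z<0$ and to the right half-plane only if it never reaches the $y$-axis) when $U$ extends infinitely in the negative $y$-direction, i.e. $U$ is not contained in $\mathbb{N}\times\mathbb{N}$.

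First I would set up the correspondence on the $J'$-side rather than the $J$-side: dualize the bijection $f$ from Claim 1, or more directly observe that $t(J,J')$ counts lattice points of $S/J'$ lying at height $\geq h$. The key geometric point is that $h$ is the smallest power of $z$ in $J'$, so for any $\beta\in\mathbb{N}^2\setminus\tilde{J'}$ with $\beta_z\geq h$ we automatically have $\beta_y\geq 1$; such $\beta$ lives strictly to the right of the $z$-axis but at a height where the whole row $z^{\beta_z}\mathbb{C}[y]$ is \emph{not} in $J'$ near the axis --- wait, that's not quite it. Let me reframe: I would instead build the injection directly into ghost vectors. Given $\beta$ with $\beta_z\geq h$ and $\beta\notin\tilde{J'}$, I choose a generator $g$ of $J$ (or a suitable element of $\tilde J$) and form the translate $\alpha$ so that $\gamma+\alpha$ lands at a point of the plane with $z$-coordinate $\beta_z$ (hence $\alpha_z<0$ will require $\gamma_z>\beta_z$, so I want $\gamma$ to be a point of $\tilde J$ high enough), and so that the resulting component $U$ of $(\tilde J+\alpha)\setminus\tilde{J'}$ is forced to be unbounded. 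The point is that because $\beta_z\geq h$, the entire horizontal line at height $\beta_z$ to the left of the $z=\beta_z$-slice of $\tilde{J'}$ extends with no obstruction from $\tilde{J'}$ all the way to $y\to -\infty$ on the part of the translated staircase $\tilde J+\alpha$ that realizes it --- so $U$ escapes to the left and is unbounded, placing $(U,\alpha)\in B^{J,J'}_n\setminus A^{J,J'}_n$.

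Concretely, the steps in order: (1) fix $h=\mathrm{height}(J')$; (2) for $\beta\in\mathbb{N}^2\setminus\tilde{J'}$ with $\beta_z\ge h$, produce a vertical-strip argument showing that at height $\beta_z$ the set $(\tilde J + \alpha)\setminus \tilde{J'}$, for an appropriately chosen $\alpha$ with $\alpha_z = -(\text{something})<0$, contains an infinite leftward ray; (3) let $U$ be the connected component containing that ray and $\gamma+\alpha$ with $\gamma\in\tilde J$ of maximal $y$-coordinate as in the definition of $g$; then $U$ is unbounded so $(U,\alpha)\in B^{J,J'}_n\setminus A^{J,J'}_n$; (4) check via the formula $g(U,\alpha)=(\gamma_y,-\alpha_z-1)$ that the assignment $\beta\mapsto(U,\alpha)$ is injective, most cleanly by showing it is a section of (a restriction of) $g$, i.e. recovering $\beta$ from $(U,\alpha)$. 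The main obstacle will be step (2)/(3): pinning down the translate $\alpha$ precisely enough that the component $U$ is genuinely unbounded (not merely large) and that distinct $\beta$ give distinct $(U,\alpha)$, handling the tie-breaking (minimal $z$-coordinate among maximal-$y$ choices of $\gamma$) consistently with the surjectivity argument already used for $g$. Once the unboundedness is locked in, the injectivity should follow from the same bookkeeping as in Claim 2, since $g$ is already known to be a bijection on all of $B^{J,J'}_n$ and we are merely restricting it to the ghost locus.
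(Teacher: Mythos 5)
Your overall strategy --- injecting the set counted by $t(J,J')$ into $B^{J,J'}_n\setminus A^{J,J'}_n$ by producing, for each $\beta$, a translate $\alpha$ with $\alpha_z<0$ whose associated component is unbounded and confined to the upper half-plane --- is exactly the strategy of the paper. But there is a misreading at the start that the rest of the argument never recovers from: $t(J,J')$ is by definition the cardinality of $\{\beta\in\mathbb{N}^2\setminus\tilde{J}:\beta_z\geq h\}$, where $h$ is the height of the \emph{second} argument $J'$; it counts standard monomials of $J$, not of $J'$. You instead take $\beta\in\mathbb{N}^2\setminus\tilde{J'}$ throughout. Since $z^h\in J'$ and $\tilde{J'}$ is upward closed, every point of $\mathbb{N}^2$ with $\beta_z\geq h$ lies in $\tilde{J'}$, so your index set is empty --- your own aside (``wait, that's not quite it'') is the symptom of this. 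With the correct set, the hypothesis $\beta\notin\tilde{J}$ is precisely the ingredient the paper uses: setting $\alpha=(-1,h-1)-\beta$, the fact that $\beta\notin\tilde{J}$ forces $\tilde{J}+\alpha$ to avoid $\{\mathfrak{y}\leq-1,\,\mathfrak{z}\leq h-1\}$, while $(0,h)\in\tilde{J'}$ clears $\{\mathfrak{y}\geq0,\,\mathfrak{z}\geq h\}$ out of the difference; together these trap the relevant component in $(-\infty,-1]\times[h,\infty)$, which is what certifies membership in $B^{J,J'}_n\setminus A^{J,J'}_n$. Your version has no access to this trapping argument, and in particular never shows the component stays in the upper half-plane (needed even to lie in $B^{J,J'}_n$).

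A second, independent problem is your unboundedness mechanism. You argue the component ``escapes to the left \ldots\ all the way to $y\to-\infty$'' along $\tilde{J}+\alpha$; but $\tilde{J}\subset\mathbb{N}^2$, so $\tilde{J}+\alpha$ is contained in $[\alpha_y,\infty)\times[\alpha_z,\infty)$ and cannot extend leftward past $y=\alpha_y$. Membership in $B^{J,J'}_n\setminus A^{J,J'}_n$ requires only that the component contain a point with negative $y$-coordinate (the resulting unboundedness is then automatic in the $+z$ direction, since the upward ray above such a point stays in $\tilde{J}+\alpha$ and misses $\tilde{J'}\subset\mathbb{N}^2$), not an infinite leftward ray. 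Finally, your $\alpha$ is never pinned down (``$\alpha_z=-(\text{something})<0$'', a $\gamma$ ``high enough''), so injectivity cannot actually be checked; the paper's explicit choice $\alpha=(-1,h-1)-\beta$ makes $\beta\mapsto\alpha$ visibly injective, and the $\mathbb{Z}^2$-grading of $H^{J,J'}$ then separates the resulting elements. As written, the proposal does not prove the lemma.
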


\begin{proof}

   Let $h$ be the height of $J'$. Each point $\beta=(\beta_y,\beta_z)$ counted by $t(J,J')$ can be mapped to $(-1,h-1)$ and these correspond to distinct elements of $B^{J,J'}_n\setminus A^{J,J'}_n$, although not necessarily all of them. Since the value of $\alpha=(\alpha_y,\alpha_z)$ used is $(-1,h-1)-\beta=(-1-\beta_y,h-1-\beta_z)$, we have $\alpha_z=h-1-\beta_z<0$, since by definition, $\beta_z\geq h$. Hence, it suffices to check that there is a connected component $U$ of $(\tilde{J}+\alpha)\setminus\tilde{J'}$, which is unbounded but only inside the upper-half-plane. First, we note that $\alpha_y=-1-\beta_y<0$ so there is certainly an unbounded component $U$ in the upper-left quadrant. However, by the definition of $\beta$ and $h$, we have $\beta\notin\tilde{J}$ and $(0,h)\in\tilde{J'}$, respectively. The former implies that there is no $(\Fy,\Fz)$ in $(\tilde{J}+\alpha)$ such that $\Fy\leq-1,\Fz\leq h-1$, as $\beta+\alpha=(-1,h-1)$. The latter implies that for any $(\Fy,\Fz)$ such that $\Fy\geq 0,\Fz\geq h$, we have $(\Fy,\Fz)\in\tilde{J'}$. Combining these implies that $U\subset (-\infty,-1]\times[h,\infty)$; thus, this corresponds to an element of $B^{J,J'}_n\setminus A^{J,J'}_n$. Note that for different values of $\beta$ we get different values of $\alpha$, so we get $t(J,J')$ distinct elements of $B^{J,J'}_n$.
\end{proof}

The following statement is immediate from Corollary \ref{Cor:An'An} and Lemma \ref{Cor:tLessGhost}. 

\begin{corollary}\label{cor:ineq}
    For $J$ and $J'$ as above, we have $\mathrm{hom}(J,S/J')\leq l+l'-t(J,J')$.
\end{corollary}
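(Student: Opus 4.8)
The plan is to derive Corollary \ref{cor:ineq} purely formally from the two results that precede it. By Corollary \ref{Cor:An'An} we already have the exact identity
\begin{align*}
    \mathrm{hom}(J,S/J') = l + l' - \mathrm{Card}(B^{J,J'}_n \setminus A^{J,J'}_n),
\end{align*}
so the statement reduces to bounding the cardinality of the ghost-vector set from below by $t(J,J')$. But this is precisely the content of Lemma \ref{Cor:tLessGhost}, which gives $t(J,J') \leq \mathrm{Card}(B^{J,J'}_n \setminus A^{J,J'}_n)$.

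Accordingly, the proof is a one-line substitution: subtracting a larger quantity makes the right-hand side smaller, so
\begin{align*}
    \mathrm{hom}(J,S/J') = l + l' - \mathrm{Card}(B^{J,J'}_n \setminus A^{J,J'}_n) \leq l + l' - t(J,J').
\end{align*}
No further argument is needed. One should only take care that the hypotheses match up — namely that $J$ and $J'$ are the same pair of $0$-dimensional monomial ideals in $S = \BC[y,z]$ (of colengths $l$ and $l'$) appearing in both cited statements, which is guaranteed by the phrase ``for $J$ and $J'$ as above'' in the corollary.

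There is essentially no obstacle here: the corollary is explicitly flagged in the excerpt as ``immediate from Corollary \ref{Cor:An'An} and Lemma \ref{Cor:tLessGhost}.'' If anything, the only point meriting a moment's attention is the direction of the inequality in Lemma \ref{Cor:tLessGhost} — one must confirm it is $t(J,J')$ that is the \emph{smaller} of the two, so that replacing $\mathrm{Card}(B^{J,J'}_n \setminus A^{J,J'}_n)$ by $t(J,J')$ in the subtracted term weakens the equality into the asserted upper bound rather than a lower bound. Granting that, the proof is complete.
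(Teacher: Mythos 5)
Your proof is correct and matches the paper exactly: the paper derives this corollary by combining the identity of Corollary \ref{Cor:An'An} with the inequality of Lemma \ref{Cor:tLessGhost}, just as you do. Nothing further is needed.
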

\begin{example}
  Let $I_0=(y^2,z^3,yz^2)$ and $I_1=(y,z)$ (as in Example \ref{Ex:first}). We have $h=1$.   The ghost vectors are $(-1,-2)$, $(-2,-1)$ and $(-1,-1)$, and these are all counted by $t(I_0,I_1)$. Hence, we have $\mathrm{Card}(B^{I_0,I_1}_n\setminus A^{I_0,I_1}_n)=3$ and $t(I_0,I_1)=3$.  See Figure \ref{Fig: t(I0,I1)}. 
  
  As another example, let $I_0=(y^4,z^6,y^3z,y^2z^3,yz^5)$ and $I_1=(y^2,z^4,yz^2)$ (as in Example \ref{Ex:first}). We have $h=4$.  Explicitly, the ghost vectors are  $(-1,-1)$, $(-1,-2)$, $(-2,-1)$ and $(-1,-3)$. Among these,  the first three vectors are counted by $t(I_0,I_1)$. Hence, we have $\mathrm{Card}(B^{I_0,I_1}_n\setminus A^{I_0,I_1}_n)=4$ and $t(I_0,I_1)=3$.
  This example shows that the inequality in Lemma \ref{Cor:tLessGhost} can be strict.  See Figure \ref{Fig: t(I0,I1)'}.

     \begin{figure}[h]
\subcaptionbox*{}[.85
 \linewidth]{%
\includegraphics[width=\linewidth]{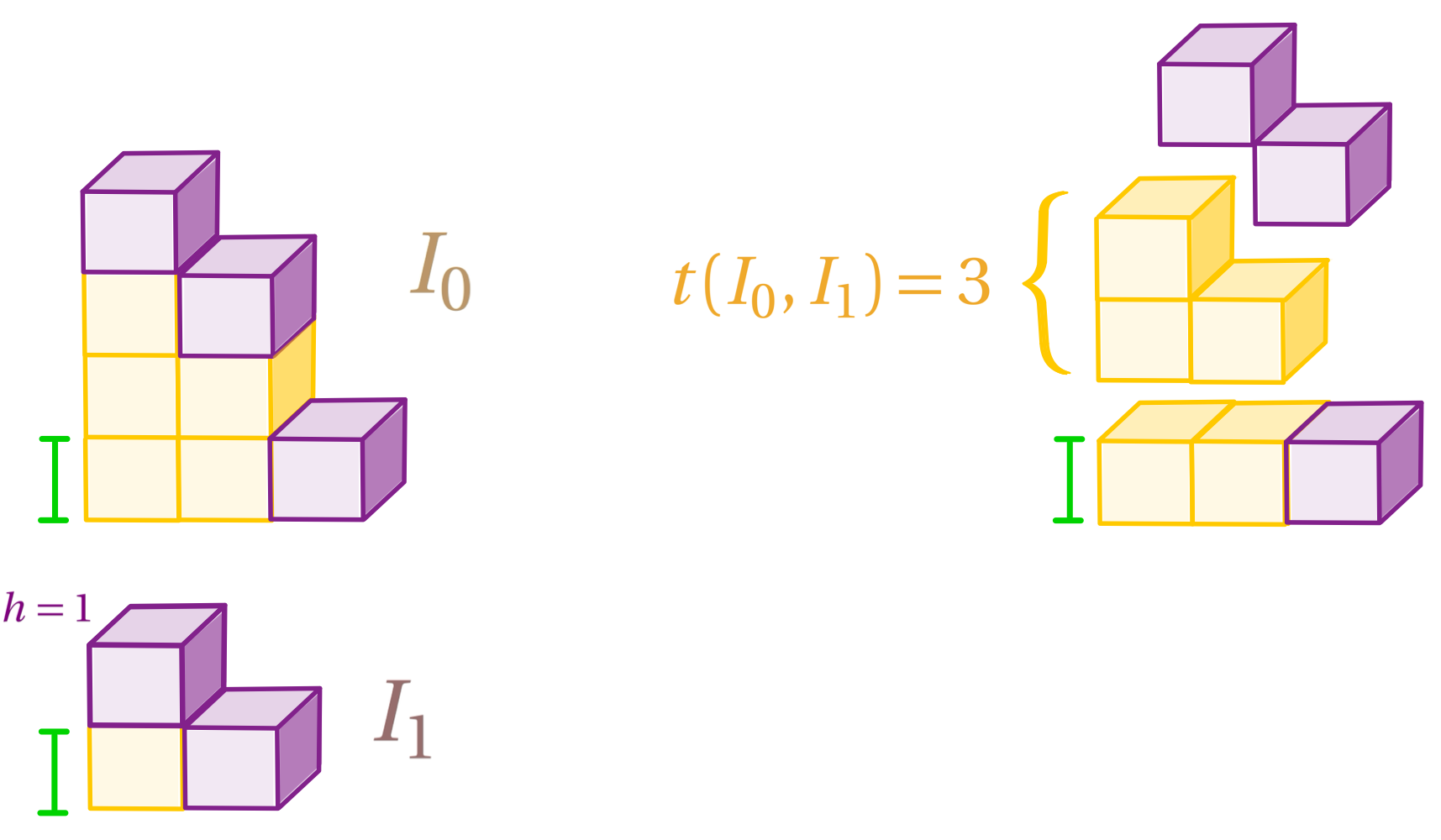}%
  }
  \vspace{-0.6cm}
  \caption{For $I_0=(y^2,z^3,yz^2)$ and $I_1=(y,z)$, we have $h=1$ and $t(I_0,I_1)=\mathrm{Card(B^{I_0,I_1}_n\setminus A^{I_0,I_1}_n)=3}$.}
  \label{Fig: t(I0,I1)}
  
\end{figure}

\begin{figure}[h]
\subcaptionbox*{}[.89
 \linewidth]{%
    \includegraphics[width=\linewidth]{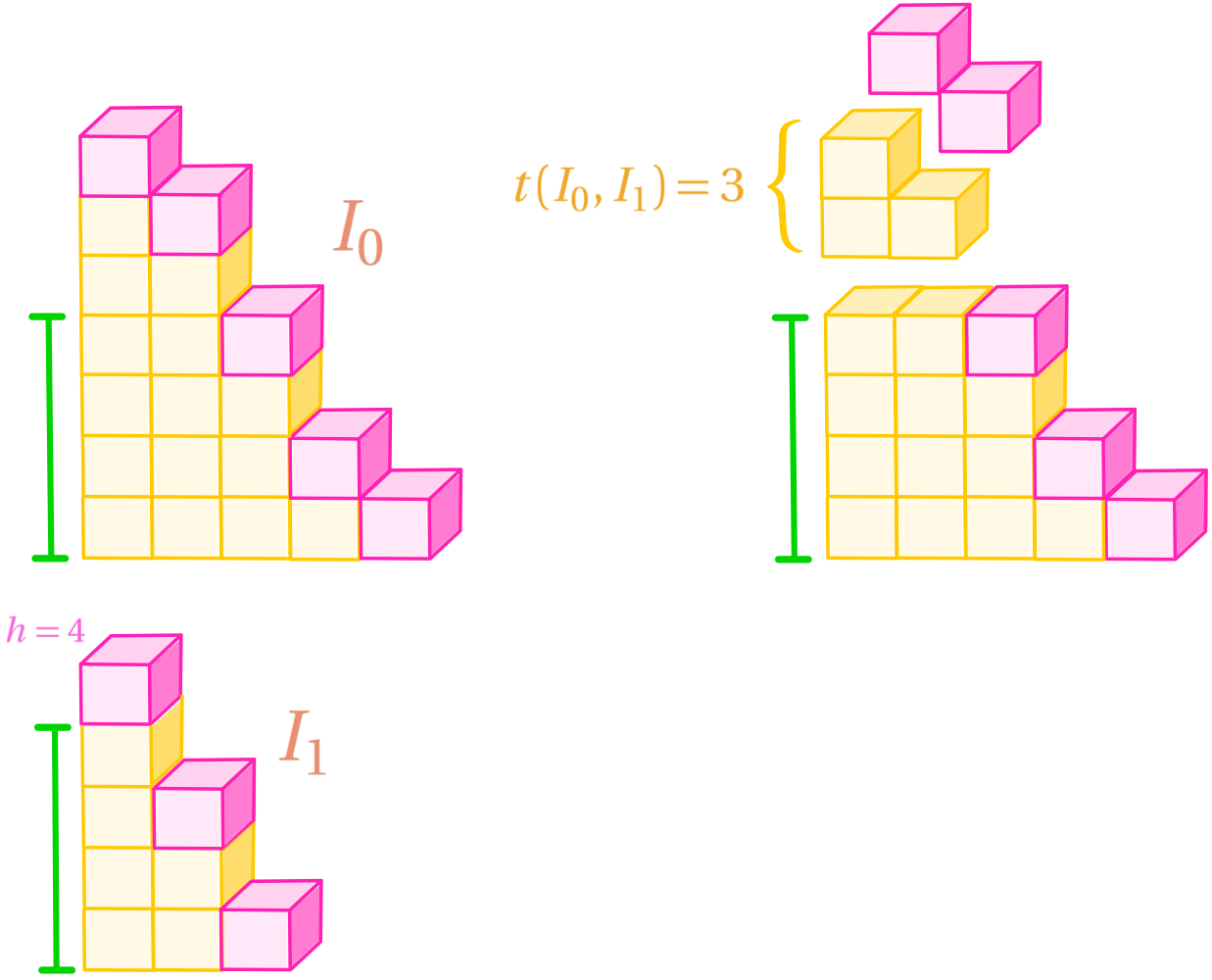}%
  }
  \vspace{-0.6cm}
  \caption{For $I_0=(y^4,z^6,y^3z,y^2z^3,yz^5)$ and $I_1=(y^2,z^4,yz^2)$, we have $h=4$, $t(I_0,I_1)=3$, and $\mathrm{Card(B^{I_0,I_1}_n\setminus A^{I_0,I_1}_n)=4}$.}
  \label{Fig: t(I0,I1)'}
  
\end{figure}
\end{example}

\begin{example}
    For $\Fm^k=\oplus_{i=0}^{m_1} x^iI_i$, one can easily check that $t(I_i,I_j)=\mathrm{Card}(B^{I_i,I_j}_n\setminus A^{I_i,I_j}_n)=\binom{j-i+1}{2}$. 
\end{example}

\section{The proof}\label{Section: proofs} In this section, we prove the main results.

First, in order to prove Theorem \ref{thm:upperbound}, we need a preparatory lemma and its immediate corollary concerning a lower bound for $t(I_i,I_j)$.
\begin{lemma}\label{Lem:bound}
    Let $I$ be a $0$-dimensional Borel-fixed ideal in $\BC[x,y,z]$, and let $\alpha=(\alpha_x,\alpha_y,\alpha_z)$ be a vector in $\BZ^3$ such that $\alpha_x<0$ and $\alpha_y,\alpha_z\geq0$ with $\alpha_x+\alpha_y+\alpha_z\leq-1$. Then, for any minimal generator $\gamma$ in $\tilde{I}$, we have $\gamma+\alpha\notin\tilde{I}$.
\end{lemma}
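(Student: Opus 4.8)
The plan is to argue by contradiction, exploiting the Borel moves of Definition~\ref{Def:Borel-fixedness} together with the fact that $\tilde{I}$ is upward closed (because $I$ is an ideal). Write $\gamma=(\gamma_x,\gamma_y,\gamma_z)$. If $\gamma_x+\alpha_x<0$ then $\gamma+\alpha\notin\BN^3\supseteq\tilde{I}$ and there is nothing to prove; so I may assume $\gamma_x+\alpha_x\geq 0$, which, since $\alpha_x\leq-1$, forces $\gamma_x\geq 1$. As $\alpha_y,\alpha_z\geq0$ we automatically have $\gamma_y+\alpha_y\geq0$ and $\gamma_z+\alpha_z\geq0$, so $\gamma+\alpha\in\BN^3$. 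Assume, for contradiction, that $\gamma+\alpha\in\tilde{I}$.

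The key step is to transport $\gamma+\alpha$ back towards $\gamma$ by a chain of admissible Borel moves that never leaves $\tilde{I}$. Working with exponent vectors, the moves guaranteed by Definition~\ref{Def:Borel-fixedness} are $\mu\mapsto\mu+\mathbf{e}_x-\mathbf{e}_y$ (valid when $\mu_y\geq1$) and $\mu\mapsto\mu+\mathbf{e}_x-\mathbf{e}_z$ (valid when $\mu_z\geq1$). Starting from $\gamma+\alpha=(\gamma_x+\alpha_x,\gamma_y+\alpha_y,\gamma_z+\alpha_z)$, I would apply the first move exactly $\alpha_y$ times, which lowers the $y$-coordinate step by step from $\gamma_y+\alpha_y$ down to $\gamma_y$, and then apply the second move exactly $\alpha_z$ times, lowering the $z$-coordinate from $\gamma_z+\alpha_z$ down to $\gamma_z$. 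Each move is applicable because the coordinate being decreased is, at that stage, at least $\gamma_y+1\geq1$ (resp.\ $\gamma_z+1\geq1$), and every intermediate point stays in $\BN^3$ since $\alpha_y,\alpha_z\geq0$ and the $x$-coordinate only increases. Applying Definition~\ref{Def:Borel-fixedness} repeatedly, all of these points lie in $\tilde{I}$; in particular so does
\[
\gamma':=(\gamma_x+\alpha_x+\alpha_y+\alpha_z,\ \gamma_y,\ \gamma_z).
\]

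To conclude, the hypothesis $\alpha_x+\alpha_y+\alpha_z\leq-1$ gives $\gamma'_x\leq\gamma_x-1$, while $\gamma'_y=\gamma_y$ and $\gamma'_z=\gamma_z$. Hence $\gamma-\mathbf{e}_x\geq\gamma'$ coordinatewise, and upward closedness of $\tilde{I}$ forces $\gamma-\mathbf{e}_x\in\tilde{I}$, i.e.\ $\mathbf{x}^{\gamma}/x\in I$; since $\gamma_x\geq1$, this contradicts $\mathbf{x}^{\gamma}$ being a minimal generator of $I$. I do not anticipate a genuine obstacle: the only point demanding care is the bookkeeping in the middle step — checking that each Borel move remains applicable and that no coordinate becomes negative — which follows immediately from $\alpha_y,\alpha_z\geq0$ and $\gamma_y,\gamma_z\geq0$.
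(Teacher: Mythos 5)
Your proof is correct and takes essentially the same route as the paper's, which compresses the whole argument into one line: assuming $\gamma+\alpha\in\tilde{I}$, Borel-fixedness and $\alpha_x<0$ yield $\gamma-\mathbf{e}_x\in\tilde{I}$, contradicting minimality of the generator. Your version simply makes explicit the chain of Borel moves and the upward-closedness step that the paper leaves implicit, and all the bookkeeping checks out.
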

\begin{proof}
    If $\gamma+\alpha\in\tilde{I}$, then by Borel-fixedness and that $\alpha_x<0$, we deduce that $\gamma-\mathbf{e}_x\in\tilde{I}$, which contradicts the fact that $\gamma$ corresponds to a generator of $I$. 
\end{proof}

\begin{corollary}\label{cor:tlowerbound}
    For $I$ a 0-dimensional Borel-fixed ideal in $\BC[x,y,z]$, with $I=\bigoplus_ix^iI_i$ then
    \[\binom{j-i+1}{2} \leq t(I_i,I_j).\]
\end{corollary}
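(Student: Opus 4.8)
\emph{Plan.} Recall that $t(I_i,I_j)$ counts the lattice points $\beta\in\BN^2\setminus\tilde{I_i}$ with $\beta_z\ge h_j$, where $h_j$ is the height of $I_j$. The idea is to manufacture, inside the slice $x=i$, an explicit triangular array of $\binom{j-i+1}{2}$ monomials that do \emph{not} lie in $I$ and whose $z$-exponent is $\ge h_j$, by feeding a single, well-chosen minimal generator of $I$ into Lemma~\ref{Lem:bound}. If $i\ge j$ then $\binom{j-i+1}{2}=0$ by the convention $\binom{m}{n}=0$ for $m<n$, so there is nothing to prove; hence assume $0\le i<j\le m_1$, where $m_1$ is the $x$-pure exponent and $I=\bigoplus_{a=0}^{m_1}x^aI_a$ with $I_a\neq(1)$ (equivalently $h_a\ge 1$) for $a<m_1$.

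\emph{Step 1: the right generator.} First I would record that heights strictly decrease along the filtration, $h_{a+1}\le h_a-1$ for $0\le a\le m_1-1$. Indeed, pick $a^{*}\le a$ minimal with $z^{h_a}\in I_{a^{*}}$; then $x^{a^{*}}z^{h_a}$ is a minimal generator of $I$, since the only proper divisors one must test are $x^{a^{*}-1}z^{h_a}$ (not in $I$ by minimality of $a^{*}$) and $x^{a^{*}}z^{h_a-1}$ (not in $I$ because $z^{h_a-1}\notin I_a\supseteq I_{a^{*}}$). As $h_a\ge1$, the variable $z$ divides this generator, so Borel-fixedness (Definition~\ref{Def:Borel-fixedness}) gives $x^{a^{*}+1}z^{h_a-1}\in I$, i.e.\ $z^{h_a-1}\in I_{a^{*}+1}\subseteq I_{a+1}$, whence $h_{a+1}\le h_a-1$. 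Taking $a=j-1$ yields $z^{h_j}\notin I_{j-1}$; combined with $z^{h_j}\in I_j$ and $z^{h_j-1}\notin I_j$ (and no $y$-divisor to test, or none at all when $h_j=0$), this shows that $\gamma:=x^{j}z^{h_j}$ is a minimal generator of $I$.

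\emph{Step 2: apply Lemma~\ref{Lem:bound} and count.} Set $r:=j-i\ge1$, and apply Lemma~\ref{Lem:bound} to $\gamma=(j,0,h_j)$ with the family of shifts $\alpha=(i-j,\alpha_y,\alpha_z)$, where $(\alpha_y,\alpha_z)$ ranges over the pairs in $\BN^2$ with $\alpha_y+\alpha_z\le r-1$. Each such $\alpha$ satisfies $\alpha_x=i-j<0$, $\alpha_y,\alpha_z\ge0$, and $\alpha_x+\alpha_y+\alpha_z\le -r+(r-1)=-1$, so the lemma gives $\gamma+\alpha=(i,\alpha_y,h_j+\alpha_z)\notin\tilde{I}$; equivalently $(\alpha_y,h_j+\alpha_z)\in\BN^2\setminus\tilde{I_i}$, with $z$-coordinate $h_j+\alpha_z\ge h_j$. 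Distinct pairs $(\alpha_y,\alpha_z)$ give distinct such points, and the number of pairs with $\alpha_y+\alpha_z\le r-1$ is $\sum_{m=0}^{r-1}(m+1)=\binom{r+1}{2}=\binom{j-i+1}{2}$. Therefore $t(I_i,I_j)\ge\binom{j-i+1}{2}$, as claimed.

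\emph{Main difficulty.} The only step that is not bookkeeping is Step~1: recognizing that $x^{j}z^{h_j}$ is a genuine minimal generator of $I$, which hinges on the heights of the $I_a$ dropping by at least one at each stage. Without this, the generator the argument would naturally produce could have $x$-exponent as small as $i+1$, and the triangle of shifts would collapse to a single point; it is precisely Borel-fixedness, applied to the pure power $z^{h_a}$, that forces the strict decrease. Everything after that is a direct invocation of Lemma~\ref{Lem:bound} together with an elementary count of lattice points in a triangle.
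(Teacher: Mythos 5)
Your proof is correct, and it follows the route the paper clearly intends: the paper states this corollary without proof as an immediate consequence of Lemma~\ref{Lem:bound}, and your Step~2 (shifting a corner generator by the triangle of vectors $\alpha=(i-j,\alpha_y,\alpha_z)$ with $\alpha_y+\alpha_z\le j-i-1$) is exactly that deduction. Your Step~1 --- verifying via the strict decrease of heights $h_{a+1}\le h_a-1$ that $x^{j}z^{h_j}$ really is a minimal generator of $I$, so that Lemma~\ref{Lem:bound} applies to it --- supplies a detail the paper leaves implicit, and it is a worthwhile one, since the lemma only speaks about minimal generators.
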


We now prove Theorem \ref{thm:upperbound}, following a similar argument as in \cite{Ramkumar-Sammartano}, but using our slightly improved inequalities to get a stronger bound which we can use to deduce, for example, the Briançon-Iarrobino Conjecture for $N=3$.
\begin{proof}[Proof of Theorem \ref{thm:upperbound}]
Recall that we have the decomposition $I=\bigoplus_ix^iI_i$. Any element of the tangent space $\mathrm{Hom}(I,R/I)$ can restrict to a non-zero element of $\mathrm{Hom}(I_i,S/I_j)$ for some $i,j$, and for basis elements, the restriction determines the original map. 

\emph{Claim.} Let $i>j$. Then, maps in $\mathrm{Hom}(I_i,S/I_j)$ with $\alpha$ such that $\alpha_y,\alpha_z\geq0$ and $\alpha_y+\alpha_z\leq i-j-2$ correspond to a single bounded connected component, but cannot extend to elements of the tangent space $\Hom(I,R/I)$, i.e., they are zero vectors.

\begin{proof}[Proof of Claim]
    Let $\alpha_x=j-i$, then by Lemma \ref{Lem:bound} for any minimal generator $\gamma\in\tilde{I}$ of $I$, we have
   \begin{align}
\gamma+\alpha+\mathbf{e}_y \notin \tilde{I},\label{1}\\
\gamma+\alpha+\mathbf{e}_x\notin\tilde{I}\label{2}.
   \end{align}
   
   Therefore, \eqref{1} implies that for generators $\gamma$ and $\gamma'$ whose $y$-coordinates differ by $1$, we have that $\gamma+\alpha$ and $\gamma'+\alpha$ are in the same component; hence, we have a single connected component in $(I_i+(\alpha_y,\alpha_z))\setminus I_j$. Also, we note that since $\alpha_y,\alpha_z\geq0$, the component is bounded.

    Similarly, \eqref{2} implies that there is only one connected component in $\tilde{I}_\alpha$, since for generators $\gamma$ and $\gamma'$ whose $z$-coordinates differ by 1, we have that  $\gamma+\alpha$ and $\gamma'+\alpha$ are in the same component. However, since $\alpha_x<0$, the component is unbounded.
\end{proof}
For fixed $i,j$, the number of maps in the claim above is $\binom{i-j}{2}$.

Also, we have 

\begin{align}\label{eq:doublebinomial}
    \sum_{j=0}^{m_1-1}\Bigg(\sum_{i=j+1}^{m_1}\binom{i-j}{2}\Bigg)= \sum_{j=0}^{m_1-1}{m_1-j+1\choose 3}={m_1+2 \choose 4}.
\end{align}

Thus, by the claim above, Corollary \ref{cor:ineq}, Corollary  \ref{cor:tlowerbound} and using \eqref{eq:doublebinomial} twice, it is straightforward to show
\begin{align*}
    T(I)&=\mathrm{hom}(I,R/I)\\
    &\leq\sum_{i=0}^{m_1}\sum_{j=0}^{m_1-1}\mathrm{hom}(I_i,S/I_j)-\sum_{j=0}^{m_1-1}\sum_{i=j+1}^{m_1}\binom{i-j}{2}\\
    &\leq\sum_{i=0}^{m_1}\sum_{j=0}^{m_1-1}\big(l_i+l_j-t(I_i,I_j)\big)-\binom{m_1+2}{4}\\
    &=\sum_{i=0}^{m_1}m_1l_i+\sum_{j=0}^{m_1-1}(m_1+1)l_j-\sum_{i=0}^{m_1}\sum_{j=0}^{m_1-1}t(I_i,I_j)-\binom{m_1+2}{4}\\
    &\leq(2m_1+1)l-\sum_{i=0}^{m_1}\sum_{j=i+1}^{m_1-1}\binom{j-i+1}{2}-\binom{m_1+2}{4}\\
    &=(2m_1+1)l-2\binom{m_1+2}{4},
\end{align*}
as required.
\end{proof}

Before proving Theorem \ref{thm:IncreasingU}, we need the following lemma.

\begin{lemma}\label{lem: m1Leqk}
    For a Borel-fixed ideal $I=(x^{m_1},y^{m_2},z^{m_3},\text{mixed terms})$  in $\BC[x,y,z]$ with  ${k+2 \choose 3}\leq l(I) <{k+3 \choose 3}$, we have $m_1\leq k$. 
\end{lemma}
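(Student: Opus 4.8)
The plan is to use the decomposition $I=\bigoplus_{i=0}^{m_1} x^i I_i$ from Definition \ref{Def: decomposition}, together with the key monotonicity property of the pieces $I_i$ forced by Borel-fixedness. First I would record that Borel-fixedness of $I$ forces an inclusion chain: for each $i$, if $\mathbf{x}^\gamma\in I$ then $x\mathbf{x}^\gamma\in I$, which in the language of the decomposition says $I_i\subseteq I_{i+1}$ as ideals in $S=\BC[y,z]$. In particular, writing $l_i=l(I_i)$ for the colength of $I_i$ in $S$ (so $l=\sum_i l_i$), the sequence $(l_i)$ is weakly decreasing in $i$, and $I_{m_1}=S$, i.e.\ $l_{m_1}=0$. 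Moreover each $I_i$ for $i<m_1$ is a proper $0$-dimensional monomial ideal in the two-variable ring, so $l_i\geq 1$ for $i\le m_1-1$; more usefully, I claim $l_i\geq \binom{m_1-i+1}{2}$.

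The heart of the argument is this lower bound $l_i\geq\binom{m_1-i+1}{2}$ for $0\le i\le m_1$. The cleanest way I would prove it: since $z^{m_3}\in I$ and $y^{m_2}\in I$, and $I$ is Borel-fixed with smallest pure exponent $m_1$, the ideal $I_0=\{\gamma\in\BN^2:\mathbf{x}^\gamma\in I\}$ (the ``top slice'') must contain the staircase region cut out by the condition that $x^{m_1}$ is a \emph{minimal} generator. Concretely, Lemma \ref{Lem:bound} (with $\alpha_x=-m_1$, and $\gamma=(m_1,0,0)$ the pure generator) shows that $(0,\alpha_y,\alpha_z)\notin\tilde I$ whenever $\alpha_y,\alpha_z\ge 0$ and $\alpha_y+\alpha_z\le m_1-1$; applying the same with $\gamma=(m_1-i,0,0)$-type considerations — or more directly, translating the generator $x^{m_1}$ — gives that $\tilde I_i$ (the exponent set of $I_i$ in $\BN^2$) omits all $(\alpha_y,\alpha_z)$ with $\alpha_y+\alpha_z\le m_1-i-1$. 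That omitted triangle has exactly $\binom{m_1-i+1}{2}$ lattice points, so $l_i\ge\binom{m_1-i+1}{2}$, with equality for $\Fm^{m_1}$ itself (cf.\ the Example on $\Fm^k$ preceding this section). Summing over $i=0,\dots,m_1$ gives
\begin{align*}
    l=l(I)=\sum_{i=0}^{m_1}l_i\ \geq\ \sum_{i=0}^{m_1}\binom{m_1-i+1}{2}=\sum_{j=1}^{m_1+1}\binom{j}{2}=\binom{m_1+2}{3}.
\end{align*}
Then $\binom{m_1+2}{3}\le l<\binom{k+3}{3}=\binom{(k+1)+2}{3}$ forces $m_1+2<k+3$, i.e.\ $m_1\le k$, as desired.

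The main obstacle I anticipate is pinning down the lower bound $l_i\ge\binom{m_1-i+1}{2}$ rigorously — specifically, arguing carefully that Borel-fixedness together with $x^{m_1}$ being a minimal generator really does force the full triangular region $\{\alpha_y+\alpha_z\le m_1-i-1\}$ to lie outside $\tilde I_i$. One must check that no mixed generator of $I$ can ``fill in'' part of this triangle: if $\mathbf{x}^\gamma\in I$ with $\gamma=(i,\alpha_y,\alpha_z)$ and $\alpha_y+\alpha_z\le m_1-i-1$, then repeatedly applying the Borel moves (replacing a factor of $y$ or $z$ by $x$) drives $\gamma$ down to $(i+\alpha_y+\alpha_z,0,0)$ with $i+\alpha_y+\alpha_z\le m_1-1$, contradicting minimality of the pure generator $x^{m_1}$ — this is exactly the mechanism of Lemma \ref{Lem:bound}, so the obstacle is really just careful bookkeeping rather than a new idea. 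Everything else (the inclusion chain $I_i\subseteq I_{i+1}$, the binomial summation, and the final numeric comparison) is routine.
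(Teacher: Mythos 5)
Your proposal is correct and rests on the same idea as the paper's proof: Borel-fixedness together with the minimality of the generator $x^{m_1}$ forces every monomial of total degree at most $m_1-1$ to lie outside $I$, so $l\geq\binom{m_1+2}{3}$, which combined with $l<\binom{k+3}{3}$ gives $m_1\leq k$. The paper phrases this contrapositively as a single tetrahedron count while you assemble the same tetrahedron slice by slice via the decomposition $I=\bigoplus_i x^iI_i$, but the mechanism (and the appeal to Lemma \ref{Lem:bound}) is identical.
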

\begin{proof}
Suppose that $m_1>k$. Then, since $I$ is Borel-fixed, we have $m_3\geq m_2 \geq m_1>k$. Therefore, each $m_i$ is at least $k+1$; hence, again using Borel-fixedness,
$\BN^3\setminus \tilde{I}$ contains at least a tetrahedron of size $k+1$. Hence,
the colength of $I$ will be at least ${k+3 \choose 3}$; contradiction.
\end{proof}
\begin{proof}[Proof of Theorem \ref{thm:IncreasingU}] For $l={k+2\choose 3}+\Delta$ with $0 \leq \Delta \leq {k+2\choose 2}-1$, we can write
\begin{align*}
    \psi'(m_1)=&\frac{\partial}{\partial m_1}\Bigg(\frac{(4m_1+2)(k^3+3k^2+2k+6\Delta)-m_1(m_1+2)(m_1^2-1)}{12}\Bigg)\\
    =&\frac{1}{6}(2k^3+6k^2+4k-2m_1^3-3m_1^2+m_1+1+12\Delta)>0.
\end{align*}

   The positivity is due to the inequality $m_1\leq k$ (Lemma \ref{lem: m1Leqk}).
\end{proof}

\begin{proof}[Proof of Corollary \ref{Cor:BIConj}] For $l=\binom{k+2}{3}$ we have
\begin{align*}
  \psi(k)=&(2k+1){k+2\choose 3}-2{k+2 \choose 4} = \frac{(2k+1)(k+2)(k+1)k}{6}-\frac{(k+2)(k+1)k(k-1)}{12}\\=&\frac{(k+2)(k+1)^2k}{4}={k+2\choose 2}{k+1\choose 2}=T(\mathfrak{m}^k).
\end{align*}
The last equality is obtained from \cite[Proposition III.4]{Bri-Iar} or \cite[Corollary 1.9]{Rezaee-23-Conjectures}. 
Combining this with Theorem  \ref{thm:IncreasingU}
implies the desired result.
   
\end{proof}

\begin{proof}[Proof of Corollary \ref{Cor:ConjB}] 
Since for $\mathfrak{m}^k$ we have $m_1=k$, and since by \cite[Lemma 1.7]{Rezaee-23-Conjectures}, $\mathfrak{m}^k$ is the only Borel-fixed ideal of colength ${k+2 \choose 3}$ with such a property, we get the desired result.
\end{proof}

\bibliographystyle{amsplain-nodash}

\bibliography{bib}

\ifx\undefined\bysame
\newcommand{\bysame}{\leavevmode\hbox to3em{\hrulefill}\,}
\fi
\begin{thebibliography}{10}

\bibitem{ABCH}
D.~Arcara, A.~Bertram, I.~Coskun, and J.~Huizenga, {\em The minimal model program for the {H}ilbert scheme of points on $\mathbb{P}^2$ and {B}ridgeland stability}, Adv. Math. {\bf 235} (2013), no.~3, 580–626.

\bibitem{Conj-ARZ}
A.~Ascott, F.~Rezaee, and Z.~Zhou, {\em A conjectural generalization of the {B}rian\c{c}on-{I}arrobino conjecture}, arXiv:2506.17704 (2025).

\bibitem{BBS}
K.~Behrend, J.~Bryan, and B.~Szendr{\H{o}}i, {\em Motivic degree zero {D}onaldson--{T}homas invariants}, Invent. Math. {\bf 192} (2013), no.~1, 111--160.

\bibitem{BFHilb}
K.~Behrend and B.~Fantechi, {\em {\itshape{Symmetric obstruction theories and Hilbert schemes of points on threefolds}}}, Algebra Number Theory {\bf 2} (2008), 313--345.

\bibitem{Bri-Iar}
J.~Briançon and A.~Iarrobino, {\em {Dimension of the punctual Hilbert scheme}}, J. Algebra {\bf 55} (1978), 536--544.

\bibitem{Hilb8}
D.~Cartwright, D.~Erman, M.~Velasco, and B.~Viray, {\em {Hilbert schemes of 8 points}}, Algebra Number Theory {\bf 3} (2009), no.~7, 763–795.

\bibitem{Hilb_11}
T.~Douvropoulos, J.~Jelisiejew, B.~I.~U. N{\o}dland, and Z.~Teitler, {\em The {H}ilbert scheme of 11 points in {$\Bbb A^3$} is irreducible}, Combinatorial algebraic geometry, Fields Inst. Commun., vol.~80, Fields Inst. Res. Math. Sci., Toronto, ON, 2017, pp.~321--352.

\bibitem{Farkas-Pandharipande-Sammartano24}
G.~Farkas, R.~Pandharipande, and A.~Sammartano, {\em {Irrational components of the Hilbert scheme of points}}, arXiv:2405.11997 (2024).

\bibitem{Fogarty}
J.~Fogarty, {\em Algebraic families on an algebraic surface}, Amer. J. Math. {\bf 60} (1968), 511--521.

\bibitem{GGGL23}
F.~Giovenzana, L.~Giovenzana, M.~Graffeo, and P.~Lella, {\em A counterexample to the parity conjecture}, Algebraic geometry {\bf 12} (2025), no.~2, 173--188.

\bibitem{GORSKY2015403}
E.~Gorsky and A.~Neguţ, {\em Refined knot invariants and {H}ilbert schemes}, Journal de Mathématiques Pures et Appliquées {\bf 104} (2015), no.~3, 403--435.

\bibitem{M2}
D.~R. Grayson and M.~E. Stillman, {\em Macaulay2, a software system for research in algebraic geometry}, Available at \url{http://www.math.uiuc.edu/Macaulay2/}.

\bibitem{Grojnowski}
I.~Grojnowski, {\em Instantons and affine algebras {I}: The {H}ilbert scheme and vertex operators}, Math. Res. Letters {\bf 3} (1996), no.~275.

\bibitem{Gross-Rezaee2}
M.~Gross and F.~Rezaee, {\em Bridgeland wall-crossing for $\mathrm{Hilb}^n(\mathbb{P}^2)$ from scattering diagram}, in preparation (2025).

\bibitem{Gross-Rezaee1}
M.~Gross and F.~Rezaee, {\em Decomposition of stability scattering diagram for $\mathbb{P}^2$ and its applications}, to appear (2025).

\bibitem{Grothendieck}
A.~Grothendieck, {\em Techniques de construction et théorèmes d’existence en géométrie algébrique iv}, Séminaire Bourbaki {\bf 6} (1995), no.~221, 249--276.

\bibitem{HAIMAN1998201}
M.~Haiman, {\em t, q-{C}atalan numbers and the {H}ilbert scheme}, Discrete Mathematics {\bf 193} (1998), no.~1, 201--224.

\bibitem{Haiman2}
M.~Haiman, {\em Hilbert schemes, polygraphs and the {M}ac{D}onald positivity conjecture}, Journal of the American Mathematical Society {\bf 14} (2001), no.~4, 941--1006.

\bibitem{HartshorneConnectedness}
R.~Hartshorne, {\em Connectedness of the hilbert scheme}, Publ. Math. Inst. Hautes Études Sci. {\bf 29} (1966), 5--48.

\bibitem{Jardim}
A.~A. Henni and M.~Jardim, {\em Commuting matrices and the {H}ilbert scheme of points on affine spaces}, Adv. Geom. {\bf 18} (2018), no.~4, 467--482.

\bibitem{HuX}
X.~Hu, {\em {On singular Hilbert schemes of points: local structures and tautological sheaves}}, arXiv:2101.05236 (2021).

\bibitem{Huizenga-2016}
J.~Huizenga, {\em Effective divisors on the hilbert scheme of points in the plane and interpolation for stable bundles}, J. Algebraic Geom. {\bf 25} (2016), 19--75.

\bibitem{Huizenga-thesis}
J.~W. Huizenga, {\em Restrictions of {Steiner} bundles and divisors on the {Hilbert} scheme of points in the plane ({PhD} thesis)}, Harvard University (2012).

\bibitem{Iarrobino72}
A.~Iarrobino, {\em {Reducibility of the families of 0-dimensional schemes on a variety}}, Invent. Math. {\bf 15} (1972), 72–77.

\bibitem{Jelisiejew20}
J.~Jelisiejew, {\em Pathologies on the {H}ilbert scheme of points}, Invent. Math. {\bf 220} (2020), no.~2, 581--610.

\bibitem{JKSCounterBehrend}
J.~Jelisiejew, M.~Kool, and R.~F. Schmiermann, {\em {Behrend's function is not constant on $\mathrm{Hilb}^n(\mathbb{C}^3)$}}, To appear in Geometry and Topology, arXiv:2311.05408 (2023).

\bibitem{Jelisiejew-Ramkumar-Sammartano24}
J.~Jelisiejew, R.~Ramkumar, and A.~Sammartano, {\em {The Hilbert scheme of points on a threefold, I}}, arXiv:2409.17009 (2024).

\bibitem{KatzS}
S.~Katz, {\em {The desingularization of $\mathrm{Hilb}^4(\mathbb{P}^3)$ and its Betti numbers}}, Zero-dimensional schemes (Ravello, 1992) (1994), 231–242.

\bibitem{LZ}
C.~Li and X.~Zhao, {\em The {M}{M}{P} for deformations of {H}ilbert schemes of points on the projective plane}, Algebraic Geometry {\bf 5} (2018), no.~3, 328--358.

\bibitem{LZ2}
C.~Li and X.~Zhao, {\em Birational models of moduli spaces of coherent sheaves on the projective plane}, Geometry and Topology {\bf 23} (2019), 347--426.

\bibitem{Mackenzie-Rezaee2}
O.~Mackenzie and F.~Rezaee, {\em Maximal singularities of the {H}ilbert scheme of a non-tetrahedral number of points in three dimensions}, in preparation (2025).

\bibitem{MNOP1}
D.~Maulik, N.~Nekrasov, A.~Okounkov, and R.~Pandharipande, {\em {Gromov--Witten theory and Donaldson--Thomas theory, I}}, Compos. Math. {\bf 142} (2006), no.~5, 1263--1285.

\bibitem{MillerSturmfels2005}
E.~Miller and B.~Sturmfels, {\em {Combinatorial commutative algebra}}, vol. 227, Graduate Texts in Mathematics, Springer-Verlag, New York, 2005.

\bibitem{Nesterov25}
D.~Nesterov, {\em {Hilbert schemes of points and Fulton-MacPherson compactifications}}, arXiv:2501.08269 (2025).

\bibitem{PandharipandeSlides}
R.~Pandharipande, {\em A tour of the geometry of points in affine space}, DMV Jahrestagung lecture (2022).

\bibitem{PT}
R.~Pandharipande and R.~P. Thomas, {\em Curve counting via stable pairs in the derived category}, Invent. Math. {\bf 178} (2009), no.~2, 407--447.

\bibitem{Ramkumar-Sammartano}
R.~Ramkumar and A.~Sammartano, {\em {On the tangent space to the Hilbert scheme of points in $\mathbb{P}^3$}}, Transactions of the American Mathematical Society {\bf 375} (2022), 6179--6203.

\bibitem{Ramkumar-Sammartano24}
R.~Ramkumar and A.~Sammartano, {\em {Rational singularities of nested Hilbert schemes }}, International Mathematics Research Notices (2024), no.~2, 1061--1122.

\bibitem{Ramkumar-Sammartano1}
R.~Ramkumar and A.~Sammartano, {\em {On the parity conjecture for Hilbert schemes of points on threefolds}}, Annali della Scuola Normale Superiore di Pisa, Classe di Scienze {\bf XXVI} (2025), no.~3.

\bibitem{Rezaee-23-Conjectures}
F.~Rezaee, {\em {Conjectural criteria for the most singular points of the Hilbert schemes of points}}, Experimental Mathematics, Online first. https://doi.org/10.1080/10586458.2024.2400181. arXiv:2312.04520 (2024), 1--16.

\bibitem{RicolfiSign}
A.~T. Ricolfi, {\em {A sign that used to annoy me, and still does}}, J. Geom. Phys. {\bf 125} (2024).

\bibitem{Sturmfels}
B.~Sturmfels, {\em {Four Counterexamples in Combinatorial Algebraic Geometry}}, Journal of Algebra {\bf 230} (2000), 282--294.

\bibitem{Vakil06}
R.~Vakil, {\em Murphy’s law in algebraic geometry: badly-behaved deformation spaces}, Invent. Math. {\bf 164} (2006), no.~3, 569–590.

\end{thebibliography}

\vspace{0.25cm}

\end{document}